\newtheorem{theorem}{Theorem}
\newtheorem{definition}[theorem]{Definition}
\newtheorem{lemma}[theorem]{Lemma}
\newtheorem{remark}[theorem]{Remark}
\newenvironment{proof}[1][Proof]{\noindent\textbf{#1.} }{\ \rule{0.5em}{0.5em}}
\numberwithin{equation}{section}
\numberwithin{theorem}{section}
\begin{document}
\title{Multitime stochastic maximum principle on curvilinear integral actions}
\author{Constantin Udri\c ste\thanks{University \textit{Politehnica} of \ Bucharest, Faculty of Applied Sciences,
Deptartment of Mathematics-Informatics I; 313, Splaiul Independen\c tei, 060042 Bucharest, Romania; eMail: udriste@mathem.pub.ro},
\,\,Virgil Damian\thanks{University Politehnica of Bucharest, Faculty of Applied Sciences,
Department of Mathematics-Informatics I, Splaiul Independen\c tei 313, 060042 Bucharest, Romania; eMail: vdamian@mathem.pub.ro}.}
\date{}
\maketitle

\begin{abstract}
Based on stochastic curvilinear integrals in the Cairoli-Walsh sense
and in the It\^{o}-Udri\c ste sense, we develop an original theory
regarding the multitime stochastic differential systems. The first
group of the original results refer to the complete integrable
stochastic differential systems, the path independent stochastic
curvilinear integral, the It\^{o}-Udri\c ste stochastic calculus
rules, examples of path independent processes, and volumetric
processes. The second group of original results include the
multitime It\^{o}-Udri\c ste product formula, first stochastic
integrals and adjoint multitime stochastic Pfaff systems. Thirdly,
we formulate and we prove a multitime maximum principle for optimal
control problems based on stochastic curvilinear integral actions
subject to multitime It\^{o}-Udri\c ste process constraints. Our theory requires the
Lagrangian and the Hamiltonian as stochastic $1$-forms.
\end{abstract}

{\bf Key-words:} stochastic curvilinear integral, multitime stochastic differential system, path independence,
multitime It\^{o}-Udri\c ste product formula, adjoint multitime stochastic system,
multitime stochastic maximum principle.

{\bf 2010 Mathematics Subject Classification: }93E20, 60H05, 60H15.

\section{Introduction}

The subject of this paper risen from the intersections of basic
ideas in our works \cite{udriste02} - \cite{udrdam11} and those
well known as stochastic literature \cite{Al:11} - \cite{tud02}, \cite{var} - \cite{zua}. The
principal aim is to solve stochastic optimal control problems based
on curvilinear functionals as actions and stochastic differential
systems as constraints. In this paper we describe how the concepts,
methods and results in \cite{udriste02} - \cite{udrdam11} can be
applied to give a rigorous multitime stochastic model.

There are several reasons why one should learn more about multitime
Wiener process, It\^{o} curvilinear stochastic integrals, multitime
stochastic differential systems, complete integrability conditions, path
independent stochastic curvilinear integral, path independent stochastic processes,
multitime It\^{o}-Udri\c ste product formula and adjoint stochastic
multitime Pfaff systems, optimization problems with curvilinear stochastic
integral functionals, and multitime stochastic maximum principle. They have
a wide range of applications outside mathematics (for example, can be
applied to give a rigorous mathematical models in Finance), there are many
fruitful connections to other mathematical disciplines and these subjects
will give a rapidly developing life of its own as a fascinating research
field with many interesting unanswered questions.

In Section \ref{sect1} we present a brief introduction in
the theory of multitime Wiener processes (e.g., \cite{davar}).
We outline in Section \ref{CaWa} how the introduction of stochastic
curvilinear integrals in the Cairoli-Walsh sense leads to a simple,
intuitive and useful understanding of multitime integration process.
In Section \ref{IntCurb} we define and study the stochastic
curvilinear integrals in our sense. In Section
\ref{SistMultitimeStoch} we study the complete integrable multitime
stochastic differential systems, the path independent stochastic
curvilinear integral, and the multitime stochastic path independent
models. Section \ref{SistAdjuncte} contains original results
regarding the multitime It\^{o}-Udri\c ste product formula and the
adjoint stochastic multitime Pfaff systems and multitime stochastic
first integrals. The stochastic control problems with curvilinear
stochastic integral functionals constrained by multitime stochastic
differential systems are formulated and solved in the Section
\ref{optimizare}. In this context we have obtained a stochastic multitime
maximum principle.

\section{Multitime Wiener process} \label{sect1}

Let $t=\left( t^{\alpha }\right) _{\alpha =\overline{1,m}}\in
\mathbb{R} _{+}^{m}$ be a multi-parameter of evolution or
\textit{multitime} and let $ \Omega _{0T}\subset \mathbb{R}_{+}^{m}$
be the\textit{\ parallelepiped} fixed by the diagonal opposite
points $0=\left( 0,...,0\right) $ and $ T=\left(
T^{1},...,T^{m}\right) $, equivalent to the closed interval $0\leq
t\leq T$, via the \textit{product order} on $\mathbb{R}_{+}^{m}$,
defined by
\begin{equation*}
\left( t_{1}^{1},t_{1}^{2},...,t_{1}^{m}\right) \leq (<)~\left(
t_{2}^{1},t_{2}^{2},...,t_{2}^{m}\right) \text{ if }t_{1}^{\alpha }\leq
(<)~t_{2}^{\alpha }\text{, for all }\alpha =\overline{1,m}\text{.}
\end{equation*}

Let $\left( \Omega ,\mathcal{F},\mathbb{P}\right)$ be a probability space
endowed with a \textit{complete}, \textit{increasing} and 
\textit{right-continuous} filtration (a complete natural history) 
$\left\{ \left( \mathcal{F}_{t}\right) _{t}:t\in \Omega _{0T}\right\}$.
Such a probability space 
$\left( \Omega ,\mathcal{F},\left( \mathcal{F}_{t}\right) _{t\in \Omega _{0T}},\mathbb{P}\right) $ 
is called \textit{filtred probability space}. Let $I$ be any subset of 
$\left\{1,...,m\right\} $ and let us denote by $\mathcal{F}_{t}\left( I\right) $ the
$\sigma -$algebra generated by the $\sigma -$algebras 
$\mathcal{F}_{\widetilde{t}}$, where $\widetilde{t}^{\alpha }\leq t^{\alpha }$, 
for all $\alpha \in I$. We say that the filtration satisfies the \textit{conditional
independence} property if for all bounded random variables $X$, all $t\in
\mathbb{R}^{m}$ and $I\subset J\subset \left\{ 1,...,m\right\} $,
\begin{equation}
\mathbb{E}\left[ X\mid \mathcal{F}_{t}\left( J\right) \right] =\mathbb{E}
\left[ \mathbb{E}\left[ X\mid \mathcal{F}_{t}\left( J\right) \right] \mid
\mathcal{F}_{t}\left( J\setminus I\right) \right] .  \label{comutare}
\end{equation}
This property implies that the conditional expectations with respect to 
$\mathcal{F}_{t}\left( I\right) $ and $\mathcal{F}_{t}\left( J\setminus
I\right) $ \textit{commute}.

For a multitime process $x=\left( x\left( t,\omega \right) \right) _{t\in \Omega_{0T}}
$, the \textit{increment} of $x$ on an interval $(t_{1},t_{2}]\subset
\Omega_{0T}$, is given by (\cite[pp. 10]{dozzi})
\begin{equation}
x\left( (t_{1},t_{2}]\right) =\sum\limits_{i\left( 1\right) =1}^{2}\cdots
\sum\limits_{i\left( m\right) =1}^{2}\left( -1\right) ^{\sum\limits_{\alpha
=1}^{m}i\left( \alpha \right) }x\left( t_{i\left( 1\right)
}^{1},...,t_{i\left( m\right) }^{m}\right).  \label{cresterea_procesului}
\end{equation}
For simplicity, here and in the whole paper, we will denote by $\xi_{t}$ the
random variable $\xi \left( t,\omega \right) $ at each multitime $t $.

\begin{definition}
\emph{(Martingale)} Let $x=\left( x_{t}\right) _{t\in \Omega _{0T}}$ be a multitime $%
\mathcal{F}_{t}-$adapted process.

\textit{(i)} The process $x$ is called \textit{weak martingale} if $\mathbb{E}\left[
x\left( (t,s]\right) \mid \mathcal{F}_{t}\right] =0$, for all $t,s\in
\mathbb{R}_{+}^{m}$, such that $t\,\leq s$.

\textit{(ii)} The process $x$ is called \textit{martingale} if $\mathbb{E}\left[
x_{s}\mid \mathcal{F}_{t}\right] =x_{t},$ for all $t,s\in \mathbb{R}_{+}^{m}$,
such that $t\,\leq s$.
\label{def_martingal}
\end{definition}

Clearly, every martingale is a weak martingale \cite{dozzi}.

\begin{definition}
A multitime, real-valued, and right-continuous process 
$A=\left( A_{t}:t\in \mathbb{R}_{+}^{m}\right) $ 
is said to be \textit{increasing} if $A_{t}=0$ $\mathbb{P-}
$a.s. for $t\in \mathbb{R}_{+}^{m}$ and if $A\left( (t,s]\right) \geq 0$,
for all subintervals $(t,s]\subset \mathbb{R}_{+}^{m}$ 
(see (\ref{cresterea_procesului})). \label{incr_proc}
\end{definition}

\begin{theorem}[{\protect\cite[Prop. 8, pp. 41]{dozzi}}]
\label{teorema}If $x$ is a squared-integrable martingale, then there exists
an increasing process $\left( A_{t}\right) $ such that $\left(
x_{t}^{2}-A_{t}\right) $ is a weak martingale.
\end{theorem}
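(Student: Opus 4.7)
The plan is to adapt the classical Doob-Meyer decomposition to the multitime framework. First, I would confirm that $x_t^2$ is a submartingale-type object: for every $t\le s$ the parallelepiped increment satisfies $\mathbb{E}\left[x^2((t,s])\mid \mathcal{F}_t\right]\ge 0$, where $x^2((t,s])$ is the $2^m$-fold alternating sum (\ref{cresterea_procesului}) applied to the process $s\mapsto x_s^2$. Expanding the alternating sum and grouping corners along the coordinate directions, this should reduce, after iterated use of the martingale identity $\mathbb{E}[x_s\mid \mathcal{F}_t]=x_t$ together with the conditional independence property (\ref{comutare}), to a conditional expectation of a sum of squared increments of $x$, which is manifestly nonnegative.

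Next, I would build $A$ through a discrete approximation. Fixing a sequence of dyadic grids on $\Omega_{0T}$ with mesh tending to zero, for each grid point $t$ I would set
$$A^n_t = \sum_{P\subset \Omega_{0t}}\mathbb{E}\!\left[x^2(P)\,\big|\,\mathcal{F}_{v(P)}\right],$$
where the sum runs over the elementary parallelepipeds $P$ of the grid contained in $\Omega_{0t}$ and $v(P)$ denotes the smallest vertex of $P$. By the first step each summand is nonnegative, so $A^n$ satisfies the increment condition of Definition \ref{incr_proc}. A direct computation using the tower property and the additivity of $x^2((\cdot,\cdot])$ over adjacent parallelepipeds then shows that $x_t^2-A^n_t$ is a weak martingale on the grid.

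Finally, I would pass to the limit. Since $x$ is square-integrable, the family $\{A^n_T\}_n$ is bounded in $L^1$, and after extraction one obtains a weak-$L^1$ (or, under a class $(D)$ type hypothesis, weak-$L^2$) limit $A$ along the union of the grids; the increasing property is preserved under such limits of nonnegative parallelepiped-measures, and the weak martingale identity for $x^2-A$ follows by taking conditional expectations through the limit.

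The main obstacle will be this limit step. Already in the one-parameter case it requires uniform integrability and an identification of the limit as a predictable compensator, and in the $m$-parameter case the geometry of the $2^m$-fold alternating increments, combined with the need to preserve the nonnegativity of parallelepiped-increments (not merely of coordinatewise differences) in the limit, makes the argument substantially more delicate. The conditional independence property (\ref{comutare}) is again the essential tool, allowing us to treat the $m$ coordinate directions as approximately independent and to reduce the problem to iterated one-parameter limiting arguments.
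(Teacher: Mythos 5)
The paper does not prove this statement at all: it is imported verbatim from \cite[Prop.~8, pp.~41]{dozzi}, so there is no internal proof to compare you against; the benchmark is the argument in the cited source, and your outline does follow that standard discrete-compensator route. Your first two steps are sound. For $m=2$, writing $a=x_{(t^1,t^2)}$, $b=x_{(s^1,t^2)}$, $c=x_{(t^1,s^2)}$, $d=x_{(s^1,s^2)}$, the martingale identity handles the comparable pairs and the conditional independence property (\ref{comutare}) factors the one cross term between the incomparable corners, $\mathbb{E}\left[bc\mid\mathcal{F}_t\right]=\mathbb{E}\left[b\mid\mathcal{F}_t\right]\mathbb{E}\left[c\mid\mathcal{F}_t\right]=a^2$, yielding exactly $\mathbb{E}\left[x^2\left((t,s]\right)\mid\mathcal{F}_t\right]=\mathbb{E}\left[\left(x\left((t,s]\right)\right)^2\mid\mathcal{F}_t\right]\ge 0$, and this extends to general $m$ by induction; your grid compensator $A^n$ then has nonnegative rectangular increments by construction, and the tower property (since $\mathcal{F}_t\subset\mathcal{F}_{v(P)}$ for every cell $P\subset(t,s]$, together with additivity of increments as in (\ref{cresterea_procesului})) gives the weak martingale identity for $x^2-A^n$ on the grid, exactly as you say.

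The genuine gap is your limit step, and it is not merely a technicality you can defer. You pass from ``$\{A^n_T\}_n$ is bounded in $L^1$'' (true: $\mathbb{E}\left[A^n_T\right]=\mathbb{E}\left[x^2\left((0,T]\right)\right]$ is constant in $n$) to extraction of a weak-$L^1$ limit, but $L^1$-boundedness does not give weak-$L^1$ sequential compactness; by Dunford--Pettis you need uniform integrability of $\{A^n_T\}_n$, and producing it is the actual content of the proposition. In the one-parameter Doob--Meyer proof this uniform integrability is obtained by optional stopping at the debuts $\inf\{t: A^n_t>c\}$ combined with the class (D) property of $x^2$ (which does hold here, since $x_t^2\le\mathbb{E}\left[x_T^2\mid\mathcal{F}_t\right]$ by Jensen); in the $m$-parameter setting there is no serviceable optional sampling theorem for such stopping points, and your closing suggestion that conditional independence reduces matters to ``iterated one-parameter limiting arguments'' is precisely the step that does not go through for weak martingales --- this is also why the paper's companion theorem, which upgrades the conclusion to $x_t^2-\left[x\right]_t$ being a martingale, assumes $\mathbb{E}\left[|x_t|^4\right]<\infty$: fourth moments make $\{A^n_T\}$ bounded in $L^2$, where weak compactness is automatic. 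Two smaller holes in the same step: a weak-$L^1$ limit does not by itself preserve the almost sure inequalities $A\left((t,s]\right)\ge 0$ (you need a Mazur-lemma passage to convex combinations converging a.s.\ along a subsequence), and the right-continuity required by Definition \ref{incr_proc}, together with adaptedness and the extension of $A$ off the dyadic grids, each need a separate argument. So: correct architecture, correct first two steps, but the crux --- uniform integrability --- is named and left unresolved rather than proved.
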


\begin{theorem}[{\protect\cite[Prop. 8, pp. 41]{dozzi}}]
Let $x$ be a strong continuous martingale such that $\mathbb{E} \left[ |x_t|^4
\right] < \infty$. Then there exists an increasing
$\mathcal{F}_{t}-$ previsible process $\left[ x\right] _{t}$ such that $\left( x_{t}^{2}-\left[
x\right] _{t}\right) $ is a martingale.
\end{theorem}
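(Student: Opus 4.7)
The plan is to upgrade the conclusion of Theorem~\ref{teorema} from \emph{weak} to (strong) martingale, using the additional strong-continuity and fourth-moment hypotheses. Let $A_{t}$ be the increasing process supplied by Theorem~\ref{teorema}, so that $x_{t}^{2}-A_{t}$ is already a weak martingale. My first step is to produce a candidate for $[x]_{t}$ as an $L^{2}$-limit of quadratic variation sums along finer and finer partitions of the parallelepiped $(0,t]$. For a partition $\pi$ into sub-rectangles $(t_{i-1},t_{i}]$, set
$$Q_{\pi}(t)=\sum_{(t_{i-1},t_{i}]\in\pi}\bigl(x((t_{i-1},t_{i}])\bigr)^{2},$$
with increments in the sense of~(\ref{cresterea_procesului}). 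The bound $\mathbb{E}[|x_{t}|^{4}]<\infty$ is what allows one to prove that $Q_{\pi}(t)$ is Cauchy in $L^{2}$ as the mesh $|\pi|\to 0$: expanding $(Q_{\pi_{1}}-Q_{\pi_{2}})^{2}$ and taking expectations, the cross terms of products of four increments are finite and collapse via the weak martingale property on disjoint rectangles together with the conditional independence property~(\ref{comutare}).

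Next, I would identify $[x]_{t}$ as the $L^{2}$-limit of $Q_{\pi}(t)$. Monotonicity of $Q_{\pi}$ under refinement in each coordinate direction, combined with strong continuity of $x$, ensures that $t\mapsto [x]_{t}$ admits a continuous increasing modification in the sense of Definition~\ref{incr_proc}; adaptedness plus path continuity then deliver $\mathcal{F}_{t}$-previsibility. Uniqueness would follow by noting that the difference of any two such processes would be an adapted continuous process of bounded variation whose square is a weak martingale, hence identically zero.

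The final step is to verify the strong martingale identity $\mathbb{E}[x_{s}^{2}-[x]_{s}\mid \mathcal{F}_{t}]=x_{t}^{2}-[x]_{t}$ for $t\le s$. Along any monotone one-axis chain $t=t_{0}<t_{1}<\dots<t_{n}=s$ inside the rectangle $[t,s]$, I would write the telescoping identity
$$x_{s}^{2}-x_{t}^{2}=2\sum_{i} x_{t_{i-1}}\bigl(x_{t_{i}}-x_{t_{i-1}}\bigr)+\sum_{i}\bigl(x_{t_{i}}-x_{t_{i-1}}\bigr)^{2},$$
take conditional expectation with respect to $\mathcal{F}_{t}$, and pass to the limit: the first sum vanishes by the strong martingale property of $x$, while the second converges in $L^{2}$ to the axial piece of $[x]_{s}-[x]_{t}$ produced in the first step. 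The main obstacle is that in the multitime setting the scalar increment $x_{s}-x_{t}$ is not equal to the rectangular increment $x((t,s])$, so the one-dimensional telescoping has to be iterated axis-by-axis, using the commutation property~(\ref{comutare}) to push conditional expectations through at each stage; this is precisely where strong continuity and conditional independence of the filtration are indispensable to upgrade the weak statement of Theorem~\ref{teorema} into a genuine martingale identity.
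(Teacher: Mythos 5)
The paper itself offers no proof of this statement: it is imported verbatim from Dozzi \cite[Prop.~8, pp.~41]{dozzi}, so your sketch can only be measured against the standard quadratic-variation construction behind that citation, whose skeleton you do follow. Within that skeleton there are, however, three concrete problems. First, your monotonicity claim is false: since $(a+b)^2=a^2+b^2+2ab$ and the cross term has no sign, the sums $Q_{\pi}(t)$ are \emph{not} monotone under refinement, not even for one parameter; increasingness of $[x]$ in the sense of Definition~\ref{incr_proc} must instead be obtained by noting that each increment $[x]\left((t,s]\right)$ is an $L^{2}$-limit of sums of nonnegative squared rectangular increments, hence a.s.\ nonnegative. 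Second, you misplace the strong-martingale hypothesis. In the $L^{2}$-Cauchy argument you claim the fourth-moment cross terms collapse ``via the weak martingale property on disjoint rectangles together with conditional independence,'' but when two subrectangles are \emph{incomparable} in the product order (side by side), neither lies in the future of the other's lower corner and the weak martingale property says nothing about their product; it is exactly here that the strong property $\mathbb{E}\left[x\left((t,s]\right)\mid \bigvee_{\alpha=1}^{m}\mathcal{F}_{t}\left(\{\alpha\}\right)\right]=0$ is indispensable. Conversely, in your final telescoping along a monotone chain the plain martingale property of Definition~\ref{def_martingal}(ii) already kills the linear terms through the tower property, so invoking ``strong'' there is where you do not need it.

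The deepest gap is the identification, asserted but nowhere established, of the chain limit with $[x]_{s}-[x]_{t}$. Your $[x]_{t}$ is built from rectangular increments of $(0,t]$, but its increment between $t$ and $s$ is \emph{not} the rectangular quadratic variation of $(t,s]$: already for the Brownian sheet with $m=2$ one has $\langle W\rangle_{s}-\langle W\rangle_{t}=s^{1}s^{2}-t^{1}t^{2}=(s^{1}-t^{1})(s^{2}-t^{2})+(s^{1}-t^{1})\,t^{2}+t^{1}(s^{2}-t^{2})$, so the rectangular part accounts for only one of three terms, while the scalar increments $x_{t_{i}}-x_{t_{i-1}}$ along your chain are increments over L-shaped regions that mix the rectangular and strip contributions. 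Reconciling these two bookkeepings axis-by-axis is precisely where the conditional independence property (\ref{comutare}) and continuity must do real quantitative work, and your sketch only gestures at this with ``iterated axis-by-axis.'' A minor point in the same spirit: your uniqueness argument is garbled --- the difference of two candidate brackets is a continuous martingale of bounded variation vanishing at $0$ (not a process ``whose square is a weak martingale''), and it is \emph{that} which forces it to vanish. In sum: right strategy, consistent with the construction the cited proposition rests on, but with a false monotonicity step, the strong-martingale hypothesis deployed in the wrong place, and the crucial rectangular-versus-axial identification left unproved.
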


Let $\mathcal{B}$ be the Borel $\sigma -$field of $\mathbb{R}_{+}^{m}$ and
let $\nu $ denote the \textit{Lebesgue measure}. 
For $t=\left(t^{1},...,t^{m}\right) $, set $\Omega _{0t}=\left[ 0,t^{1}\right]
\times \left[ 0,t^{2}\right] \times ...\times \left[ 0,t^{m}\right] $ and
\begin{equation*}
W\left( t\right) =W\left( t^{1},...,t^{m}\right) =W\left( \Omega_{0t}\right).
\end{equation*}
This defines a multitime mean-zero Gaussian process \cite[Ch. 5]{davar}
$W=W\left( t\right) _{t\in \Omega _{0T}}$
with covariance
\begin{equation*}
\mathbb{E}\left[ W_{t_{1}}W_{t_{2}}\right] =\nu \left( \left[ 0,t_{1}\right]
\cap \left[ 0,t_{2}\right] \right) =\prod\limits_{\alpha =1}^{m}\min
\left\{t_{1}^{\alpha },t_{2}^{\alpha }\right\}.
\end{equation*}

\begin{definition}
A stochastic process of the form $\left( W_{t}:t\in \mathbb{R}_{+}^{m}\right) $ is
called \textit{multitime Wiener process }(starting at zero) or
\textit{Brownian sheet} if $W_{0}=0$ and if $W_{t}$ is a gaussian process
with $\mathbb{E}\left[ W_{t}\right] =0$ and for $t_{1}=\left( t_{1}^{\alpha
}\right) _{\alpha =\overline{1,m}}$, $t_{2}=\left( t_{2}^{\alpha }\right)
_{\alpha =\overline{1,m}}$,
\begin{equation*}
\mathbb{E}\left[ W_{t_{1}}W_{t_{2}}\right] =\prod\limits_{\alpha =1}^{m}\min
\left\{ t_{1}^{\alpha },t_{2}^{\alpha }\right\}.
\end{equation*}
\end{definition}

\begin{definition}
The multitime stochastic process $\left( W_{t}:t\in \Omega _{0T}\right) $ is called a
\textit{\ }$\mathcal{F}_{t}$-\textit{Wiener process} if, in addition,
$\mathbb{E}\left[ W_{s}\mid \mathcal{F}_{t}\right] =W_{t}\text{,}$
for all $t,s\in \mathbb{R}_{+}^{m}$, such that $t\,\leq s$.
\end{definition}

A first example of martingale is the multitime Wiener process.

\textbf{Hypothesis right-left (RL)} Suppose a sample sheet 
$x:\Omega_{0T}\longrightarrow \mathbb{R} $ is continuous from the right and
bounded from the left at every point. That is, for every $t_{0}\in \Omega
_{0T}$, $t\downarrow t_{0}$, implies $x\left( t\right) \longrightarrow
x\left( t_{0}\right) $ and for $t\uparrow t_{0}$, $\lim_{t\uparrow
t_{0}}x\left( t\right) $ exists, but need not be $x\left( t_{0}\right) $. We
use only stochastic processes $x$ where almost all sample sheets have the
\textbf{RL} property.

\section{Stochastic curvilinear integrals in \\the Cairoli-Walsh sense} \label{CaWa}

Let $\Gamma$ be an oriented piecewise $C^{1}$ curve in $\mathbb{R}_{+}^{m}$ given by parametric
representation $t=\gamma (\tau )$, $\tau \in \lbrack 0,1]$. The curve $\widehat{\Gamma}$ of the parametric
representation $t=\widehat{\gamma }(\tau )=\gamma (1-\tau )$,
$\tau \in \lbrack 0,1]$ has opposite orientation.

\begin{definition}
A piecewise $C^{1}$ curve is called of \textit{pure type} if each component
of the tangent vector field $\displaystyle\frac{d\gamma}{d\tau}$ preserves its sign.
\end{definition}

Given a process $W=W\left( t\right) _{t\in \Omega _{0T}}$
and a pure type curve $\Gamma$, we can define $m$ processes
$W_{\alpha }^{\Gamma }$, $\alpha =\overline{1,m}$, on $\Gamma$,
which may be thought of as coming from increments in the direction of each
$Ox_{\alpha }$ axis. A suggestive notation for this would be
$dW_{\alpha }^{\Gamma }=\partial _{\alpha }W$.

If $\displaystyle\frac{d\gamma^{\alpha}}{d\tau}(\tau )\geq 0$, i.e., the component
$\gamma^{\alpha }$ is nondecreasing function and if $\gamma (0)=t_{0}$
and $t=\gamma (\sigma)\in \Gamma $, then we introduce the subset
\[
D_{t}^{\alpha }=\{s\in \mathbb{R}_{+}^{m}\mid t_{0}^{\alpha }<s ^{\alpha }\leq t^{\alpha },
~0\leq s^{\beta }\leq \gamma ^{\beta }(\tau ), \beta \not= \alpha,\, 0<\tau \leq \sigma\}.
\]
For a squared-integrable martingale $W$ and a curve $\Gamma$ of pure
type joining the initial point $t_{0}$ and the final point $t_{f}$, with
$\displaystyle\frac{d\gamma^{\alpha}}{d\tau}(\tau )\geq 0$, we define
$W_{\alpha }^{\Gamma}(t_{f})=W(D_{t_{f}}^{\alpha })$ and
$W_{\alpha }^{\Gamma }(t)=\mathbb{E\{}W(D_{t_{f}}^{\alpha })\mid \mathcal{F}_{t}^{\alpha }\}$, $t\in \Gamma $.
Then each $W_{\alpha }^{\Gamma }=\{W_{\alpha }^{\Gamma }(t),\mathcal{F}_{t}^{\alpha },t\in \Gamma \}$
is a one-parameter square-integrable martingale. Consequently, one can define the It\^{o} curvilinear integral of a process $\phi
=\left\{ \phi \left( t\right) :t\in \Omega _{0T}\right\}$ with respect to $W_{\alpha}^{\Gamma }$, in the usual way, and denote it by
\[
\int_{\Gamma }\phi(t)\,\, \partial _{\alpha }W.
\]
If the component $\alpha$ of the tangent vector field satisfy the condition $\displaystyle\frac{d\gamma^{\alpha}}{d\tau}(\tau )\leq 0$
(nonincreasing function), then we define
\[
\int_{\Gamma }\phi(t)\,\, \partial _{\alpha }W = -\int_{\widehat{\Gamma }}\phi(t)\,\,\partial _{\alpha }W.
\]

Finally, if $\Gamma$ is of pure type curve, we let
\[
\int_{\Gamma }\phi(t) \,\,\partial W=\sum_{\alpha =1}^{m}\int_{\Gamma }\phi(t)\,\,\partial _{\alpha }W.
\]

\section{Stochastic curvilinear integrals in \\the It\^{o}-Udri\c ste sense} \label{IntCurb}

In our theory we need a curve
\begin{equation}
\gamma :\left[ 0,1\right] \longrightarrow \Omega _{0T}\subset \mathbb{R}
_{+}^{m}, \,t=t\left( \tau \right), \,\tau\in\left[ 0,1\right],
\end{equation}
where $\tau$ is the curvilinear abscissa. The curve $\gamma$ is called
{\it increasing} if
\begin{equation*}
\gamma(\tau)\leq \gamma(\tau^{\prime})\,\, \hbox{in}\,\, \mathbb{R}%
_{+}^{m},\,\, \hbox{if}\,\, \tau \leq \tau^{\prime}\,\,\hbox{in} \,\,[0,1].
\end{equation*}
The curve $\gamma$ is called {\it piecewise $C^1$} if there exists a curve $%
\gamma^{\prime} :\left[ 0,1\right] \longrightarrow \Omega _{0T}$ with
finitely many discontinuities, satisfying the (RL) hypotheses, so that
\begin{equation*}
\gamma(\tau) = \int_0^\tau \gamma^{\prime}(\lambda) d\lambda,\,\,\forall
\tau \in [0,1].
\end{equation*}

Let $t=\left( t^{1},...,t^{m}\right) \in \Omega _{0T}$. Let $\left(
W_{t}\right) _{t\in \Omega _{0T}}$ be a multitime Wiener process. Then (%
\cite[Th. 3, pp. 45]{dozzi}) $W^{2}$ decomposes into the sum of a weak
martingale and an increasing process (see Definition \ref{def_martingal} and
Definition \ref{incr_proc}). According to Dozzi (\cite[Lemma 6, pp. 151]%
{dozzi}), the increasing process associated with $\left( W_{t}\right) _{t\in
\Omega _{0T}}$ is denoted by $\left\langle W\right\rangle
_{t}=t^{1}t^{2}...t^{m}$ and it means that for $h=\left(
h^{1},...,h^{m}\right) $, we have
\begin{equation*}
\mathbb{E}\left[ W_{t+h}^{2}-\prod\limits_{\alpha =1}^{m}\left( t^{\alpha
}+h^{\alpha }\right) \mid \mathcal{F}_{t}\right] =W_{t}^{2}-\prod\limits_{%
\alpha =1}^{m}t^{\alpha }.
\end{equation*}
In other words, for the multitime $t=\left( t^{1},...,t^{m}\right) \in \Omega _{0T}$, the stochastic process
\begin{equation*}
\left( W_{t}^{2}-\prod\limits_{\alpha =1}^{m}t^{\alpha}\right) _{t\in \Omega
_{0T}}
\end{equation*}
is a (continuous) martingale. This leads to the construction of stochastic
curvilinear integral with respect to $W$. The case $m=2$ was explicitly
given in \cite[\S 7, pp. 157]{cairoli&walsh}.

\begin{definition}
\label{definitia.integralei} Let $\phi =\left\{ \phi \left( t\right) :t\in
\Omega _{0T}\right\} $ be a real $\mathcal{F}_{t}-$predictible process, such
that
\begin{equation}
\mathbb{E}\left[ \int_{\gamma _{0T}}\phi _{t}^{2}\,\,d\left\langle
W\right\rangle _{t}\right] <\infty .
\end{equation}
Let ${\gamma _{0T}}$ be a $C^1$ increasing curve. The real number
\begin{equation}
I\left( \phi \right) =\int_{\gamma _{0T}}\phi \left( t\right)
dW_{t}=\int_{0}^{1}\phi \left( \gamma \left( \tau \right) \right) dB_{\tau}
\label{intcurv000}
\end{equation}
is called the \textit{stochastic curvilinear integral} of the process $\phi
=\left\{ \phi \left( t\right) :t\in \Omega _{0T}\right\}$ along the curve $%
\gamma $ with respect to $(W_{t})_{t\in \Omega _{0T}}$, where $B_{\tau}
\overset{def}{=}W_{\gamma \left( \tau \right) }$ is stochastic processes normal distributited of mean $0$ and variance $vol(\Omega_{0\gamma(\tau)})$.
\end{definition}

For $m=2$, see \cite{cairoli&walsh}.

\begin{definition}
Let $\phi =\left\{ \left( \phi _{a}\left( t\right) \right) _{a=\overline{1,d}%
}:t\in \Omega _{0T}\right\} $ be an $\mathcal{F}_{t}-$predictible process
with values in $\mathbb{R}^{d}$. Let ${\gamma _{0T}}$ be a $C^1$ increasing
curve. The real number
\begin{equation}
I\left( \phi \right) =\int_{\gamma _{0T}}\phi _{a}\left( t\right)
dW_{t}^{a}=\int_{0}^{1}\phi _{a}\left( \gamma \left( \tau \right) \right)
dB_{\tau }^{a}  \label{intcurv001}
\end{equation}
(i.e., a sum of It\^{o} classical integrals) is called the \textit{%
stochastic curvilinear integral} of process $\phi =\left\{ \left(
\phi_{a}\left( t\right) \right) _{a=\overline{1,d}} \right\}$, 
along the curve $\gamma $ with respect to the Wiener
process $W=(W_{t}^{a})_{a=\overline{1,d}}$, where $B_{\tau }^{a}\overset{def}%
{=}W_{\gamma \left( \tau \right) }^{a}$, for each $a=\overline{1,d}$, is a stochastic processes as in the above definition.
\end{definition}

Here and in the whole article, for any given Euclidean space $H$, we denote
by $\left\langle \cdot ,\cdot \right\rangle $ (resp. $\left\Vert \cdot
\right\Vert $) the \textit{inner product} (resp. \textit{norm}) of $H$.
Also, we use Einstein summation convention.

\begin{remark}
\label{remarca_01} With the previous definition (\ref{intcurv001}), we have,
by It\^{o}'s isometry property
\begin{equation*}
\mathbb{E}\left[ \left\vert I\left( \phi \right) \right\vert ^{2}\right] =
\mathbb{E}\left[ \int_{\gamma _{0T}}\Vert \phi (t)\Vert ^{2}~d(t^{1}...t^{m})%
\right]
\end{equation*}
\begin{equation*}
=\mathbb{E}\left[ \int\limits_{0}^{1}\Vert \phi \left( t\left( \tau\right)
\right) \Vert ^{2}\frac{d}{d{\tau }}\left( t^{1}\left( \tau
\right)...t^{m}\left( \tau \right) \right)d\tau \right].
\end{equation*}
\end{remark}

A particular case ($n=1, m=2$) of the previous formula is proved in \cite%
{cairoli&walsh}. It is obvious that $\mathbb{E}\left[ I\left( \phi \right) %
\right] =0$.

\section{Multitime stochastic differential systems} \label{SistMultitimeStoch}

Let us change the single-time approach of stochastic theory (see, for example \cite{Ok:03}) 
to a new approach issuing from the papers of the first author. We use a multitime parameter of evolution 
$t=\left( t^{1},...,t^{m}\right) \in \Omega _{0T}$ and we introduce the multitime
stochastic differential systems ($mSDS$). For that, let $f(t,x_{t})$
$ =\left( f_{\alpha }^{i}\left( t,x_{t}\right) \right)
_{\substack{ i=\overline{1,n}  \\ \alpha =\overline{1,m}}}$ be an $n\times m$
matrix of previsible processes with {\cite[pp. 73]{dozzi}}
\begin{equation*}
\mathbb{E}\left[ \int_{\gamma _{0t}}\left\Vert f\left( s(\tau),x_{s(\tau)}\right)
\right\Vert d\tau\right] <\infty \text{ for all }t\in \Omega _{0T},
\end{equation*}%
where $\tau$ is the curvilinear abscissa on $\gamma _{0t}$, let $g\left(
t,x_{t}\right) =\left( g_{a}^{i}\left( t,x_{t}\right) \right) _{\substack{ i=%
\overline{1,n}  \\ a=\overline{1,d}}}$ be an $n\times d$ matrix of
previsible processes, such that
\begin{equation*}
\mathbb{E}\left[ \int_{\gamma _{0t}}\Vert g\left( s,x_{s}\right) \Vert
^{2}d(s^{1}...s^{m})\right] <\infty \text{ for all }t\in \Omega _{0T},
\end{equation*}%
and let $W=\left( W_{t}^{a}\right) _{t}$, $a=\overline{1,d}$, be a multitime
Wiener process with values in $\mathbb{R}^{d}$.

\begin{definition}
\label{def00001} Let $i=\overline{1,n}$, $\alpha =\overline{1,m}$ and $a=\overline{1,d}$.
The multitime stochastic differential system
\begin{equation}
dx^{i}_t=f_{\alpha }^{i}\left( t,x_t\right) dt^{\alpha
}+g_{a}^{i}\left(t,x_t\right) dW^{a}_t,\text{ }t\in \Omega _{0T}
\label{0201}
\end{equation}
is called \textit{It\^ o - Pfaff stochastic system}.
\end{definition}

The coefficients $f_{\alpha }^{i}\left( t,x_t\right) $ are called \textit{drift coefficients}
and $g_{a}^{i}\left( t,x_t\right) $ are the \textit{diffusion coefficients}.

\begin{definition}
A multitime stochastic differential system is called \textit{completely
integrable} if there exists an $m$-sheet $x(t)$ satisfying the stochastic
integral relation
\begin{equation*}
x^{i}\left(t\right) =x^{i}\left( 0\right) +\int_{\gamma
_{0t}}f_{\alpha}^{i}\left(s,x_{s}\right) ds^{\alpha }+ \int_{\gamma
_{0t}}g_{a}^{i}\left(s,x_{s}\right) dW_{s}^{a},\,\,\,t\in \Omega_{0T},
\end{equation*}
independent of the selection of the $C^1$ increasing curve ${\gamma _{0t}}$.
\end{definition}

Thus, the right hand side of an It\^ o - Pfaff stochastic system is well-defined as a
stochastic curvilinear integral, under suitable assumptions on the functions $f_{\alpha}^{i}$ and $g_{a}^{i}$.

The first integral can be interpreted as an ordinary curvilinear integral.
The second integral, i.e., the stochastic curvilinear integral, cannot be
treated as such, since the sheet-wise $W_{t}$ is nowhere differentiable. Of
course, the complete integrability conditions for the first curvilinear
integral are contained in the classical books while, for the stochastic
curvilinear integral, only partial results can be found in \cite{cairoli&walsh}, \cite{dozzi}.

If a multitime It\^ o - Pfaff stochastic differential system is not completely integrable,
given the increasing $C^1$ curve ${\gamma _{0t}}: s^\alpha = s^\alpha
(\tau),\,\,\tau \in [0,\tau_0],\,\, s(0) = 0$, a curve $x(\tau)$ which
satisfies
\begin{equation*}
x^{i}(\tau) =x^{i}(0) +\int_0^\tau f_{\alpha}^{i}\left(s(\lambda),x(\lambda)
\right) \frac{ds^\alpha}{d\lambda}d\lambda + \int_0^\tau
g_{a}^{i}\left(s(\lambda),x(\lambda) \right) dB_\lambda^{a}
\end{equation*}
is called \textit{solution}.

Having in mind some ideas in \cite{cairoli&walsh}, \cite{dozzi}, completing with our ideas,
let us praise a fundamental path independent stochastic curvilinear integral.

\subsection{Path independent stochastic curvilinear integral}

Here is for the first time when is presented a curvilinear stochastic
integral independent of the path.

\begin{theorem}
Let ${\gamma_{0t}}$ be an increasing curve in $%
\Omega_{0T}$. The stochastic curvilinear integral (primitive) $%
\int_{\gamma_{0t}}W_s dW_s$ has the value $\frac{W_t^2 - W_0^2}{2} - \frac{1%
}{2}\,\,t^1\cdots t^m$. It can be written as
\begin{equation*}
\frac{1}{2} W_t^2 = \frac{1}{2} W_0^2 + \frac{1}{2}\int_{\gamma_{0t}}d(s^1%
\cdots s^m) + \int_{\gamma_{0t}}W_s dW_s.
\end{equation*}
Obviously, $W_0 = 0$ and the stochastic curvilinear integral $%
\int_{\gamma_{0t}}W_s dW_s$ is path independent.
\end{theorem}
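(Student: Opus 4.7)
The plan is to reduce the multitime statement to the classical one-parameter It\^o formula via the curve parametrization, then read the result back in terms of $W_t$.

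First I would fix an increasing piecewise $C^{1}$ curve $\gamma:[0,1]\to\Omega_{0T}$ with $\gamma(0)=0$, $\gamma(1)=t$, and set $B_{\tau}\overset{def}{=}W_{\gamma(\tau)}$, exactly as in Definition \ref{definitia.integralei}. The first key step is to identify the stochastic nature of $(B_{\tau})_{\tau\in[0,1]}$: since $\gamma$ is increasing, the filtration $\mathcal{G}_{\tau}:=\mathcal{F}_{\gamma(\tau)}$ is an increasing right-continuous one-parameter filtration, and the $\mathcal{F}_{t}$-martingale property of $W$ gives $\mathbb{E}[B_{\tau'}\mid\mathcal{G}_{\tau}]=\mathbb{E}[W_{\gamma(\tau')}\mid\mathcal{F}_{\gamma(\tau)}]=W_{\gamma(\tau)}=B_{\tau}$ for $\tau\le\tau'$. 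So $B$ is a continuous one-parameter square-integrable martingale.

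Next I would compute its quadratic variation. By the result of Dozzi recalled in the excerpt, the increasing process associated with the multitime Wiener process is $\langle W\rangle_{t}=t^{1}\cdots t^{m}$, i.e.\ $W_{t}^{2}-t^{1}\cdots t^{m}$ is a martingale. Evaluating along $\gamma$ and conditioning on $\mathcal{G}_{\tau}$ yields
\begin{equation*}
\mathbb{E}\bigl[B_{\tau'}^{2}-\gamma^{1}(\tau')\cdots\gamma^{m}(\tau')\,\big|\,\mathcal{G}_{\tau}\bigr]=B_{\tau}^{2}-\gamma^{1}(\tau)\cdots\gamma^{m}(\tau),
\end{equation*}
which shows that $\langle B\rangle_{\tau}=\gamma^{1}(\tau)\cdots\gamma^{m}(\tau)$. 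In particular $\langle B\rangle_{1}=t^{1}\cdots t^{m}$.

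Now I would apply the classical one-parameter It\^o formula to $f(x)=\tfrac12 x^{2}$ and the continuous martingale $B$, obtaining
\begin{equation*}
\tfrac12 B_{1}^{2}=\tfrac12 B_{0}^{2}+\int_{0}^{1}B_{\tau}\,dB_{\tau}+\tfrac12\langle B\rangle_{1}.
\end{equation*}
Translating back through Definition \ref{definitia.integralei} gives $\int_{0}^{1}B_{\tau}\,dB_{\tau}=\int_{\gamma_{0t}}W_{s}\,dW_{s}$, while $B_{0}=W_{0}$, $B_{1}=W_{t}$, and $\langle B\rangle_{1}=t^{1}\cdots t^{m}=\int_{\gamma_{0t}}d(s^{1}\cdots s^{m})$ (the last equality being the fundamental theorem of calculus applied to $\tau\mapsto\gamma^{1}(\tau)\cdots\gamma^{m}(\tau)$, which is genuinely path independent because its value at $\tau=1$ depends only on $\gamma(1)=t$). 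Rearranging yields the stated identity, and since both $W_{t}^{2}$ and $t^{1}\cdots t^{m}$ depend only on the endpoint $t$, path independence of $\int_{\gamma_{0t}}W_{s}\,dW_{s}$ follows immediately.

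The main obstacle I anticipate is step two, the identification of the quadratic variation of the time-changed process $B$: one must check that Dozzi's multitime increasing process $\langle W\rangle$, evaluated along an increasing curve, really coincides with the one-parameter quadratic variation of $B$ in the sense required by the classical It\^o formula. The conditional independence property \eqref{comutare} and the monotonicity of $\gamma$ are exactly what make this transfer legitimate; once it is in place, the rest is a direct application of the one-parameter theory.
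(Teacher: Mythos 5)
Your proof is correct, but it follows a genuinely different route from the paper's. The paper's proof (attributed to Ionel \c Tevy, and written out only for $m=2$ ``for simplicity'') works from first principles with Riemann sums: it partitions the curve by points $z_k^n$, proves by hand a quadratic-variation lemma --- $\sum_k \left( W(z_{k+1}^n)-W(z_k^n)\right)^2 \to \mathrm{area}(L_{\alpha\beta})$ in $L^2$, using independence of increments over the disjoint regions $L_{z_k^n z_{k+1}^n}=\Omega_{0z_{k+1}^n}\setminus\Omega_{0z_k^n}$ and a fourth-moment bound --- and then combines this with the algebraic identity $\sum_k W(z_k^n)\left(W(z_{k+1}^n)-W(z_k^n)\right)=\frac{1}{2}W^2(z)-\frac{1}{2}\sum_k\left(W(z_{k+1}^n)-W(z_k^n)\right)^2$ for the It\^o sums. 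You instead time-change along the curve: $B_\tau=W_{\gamma(\tau)}$ is a continuous square-integrable $\mathcal{G}_\tau$-martingale, its bracket is pinned down as $\gamma^1(\tau)\cdots\gamma^m(\tau)$ by restricting the martingale $W_t^2-t^1\cdots t^m$ (quoted from Dozzi earlier in the paper) to the curve and invoking uniqueness in the Doob--Meyer decomposition, and the classical one-parameter It\^o formula for $f(x)=x^2/2$ finishes. Your route buys generality and economy: it handles arbitrary $m$ in one stroke, and it is arguably the canonical argument given Definition \ref{definitia.integralei}, which already \emph{defines} the curvilinear integral as $\int_0^1 B_\tau\,dB_\tau$, so once $\langle B\rangle_\tau$ is identified the theorem is literally the one-parameter $\int B\,dB$ formula read along the curve. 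The paper's route buys self-containedness and an extra dividend: its quadratic-variation lemma is of independent interest (the variation of the sheet along an increasing path accumulates the area between nested rectangles), a fact your argument imports as a black box through Dozzi's theorem. One small correction: the conditional independence property is not what legitimizes your transfer --- along an increasing curve the product order alone makes the restriction of $W_t^2-t^1\cdots t^m$ a one-parameter martingale, conditional independence entering only upstream in the proof that $\langle W\rangle_t=t^1\cdots t^m$; and for Doob--Meyer uniqueness you should note explicitly that $\tau\mapsto\gamma^1(\tau)\cdots\gamma^m(\tau)$ is continuous, nondecreasing and null at $\tau=0$, which holds since $\gamma$ is increasing with $\gamma(0)=0$.
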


\begin{remark} Our point of view requires the volume written as a special
curvilinear integral,
\begin{equation*}
\int_{\gamma_{0t}}d(s^1\cdots s^m) =
vol(\Omega_{0t}),\,\,\int_{\gamma_{st}}d(\tau^1\cdots \tau^m) =
vol(\Omega_{0t}) - vol(\Omega_{0s}),\,\,s\leq t.
\end{equation*}
\end{remark}

\textbf{Proof (Ionel \c Tevy)} For simplicity, we refer to $m=2$. Let $%
\mathcal{M}$ be the random measure in $\mathbb{R}_{+}^{2}$ which assigns to
each Borel set $A$ a Gaussian random variable of mean zero and variance $\mu
( A) $, where $\mu $ is Lebesgue measure and which assigns independent
random variables to disjoint sets.

Define a process $(W(z) :z\in \mathbb{R}_{+}^{2}) $ by $W(z) =W( \Omega
_{0z}) $, where $\Omega _{0z}$ is the rectangle whose lower left hand corner
is the origin $O(0,0)$ and whose upper right hand corner is $z=(s,t)$. The
process $(W(z) :z\in \mathbb{R}_{+}^{2})$ is called a \textit{two-parameter Wiener process}.

For $z_{1}( s_{1},t_{1})$ and $z_{2}(s_{2},t_{2})$ with $z_{1}<z_{2}$, note $%
L_{z_{1}z_{2}}=\Omega _{0z_{2}}\setminus \Omega_{0z_{1}}$. Let $[a,b] \times
[c,d] $ be a rectangle in $\mathbb{R}_{+}^{2}$, and let
\begin{equation*}
P^{n}=\{ a=s_{0}^{n}<s_{1}^{n}<...<s_{m_{n}}^{n}=b\},\,\, Q^{n}=\{
c=t_{0}^{n}<t_{1}^{n}<...<t_{m_{n}}^{n}=d\}
\end{equation*}
be partitions of the segment $[a,b]$ and, respectively, $[c,d]$ with $\vert
P^{n}\vert \longrightarrow 0$, $\vert Q^{n}\vert \longrightarrow 0$ as $%
n\longrightarrow \infty $. Let us denote $\alpha =(a,c)$, $\beta =(b,d)$, $%
z_{k}^{n}=\left( s_{k}^{n},t_{k}^{n}\right) $, $k=\overline{1,m_{n}}$. Then
\begin{equation*}
\vert L^{n}\vert =\underset{k}{\max }\left( area\left(
L_{z_{k}^{n}z_{k+1}^{n}}\right) \right) \longrightarrow 0\text{ as }%
n\longrightarrow \infty \text{.}
\end{equation*}

\begin{lemma}[Quadratic variation]
The limit
\begin{equation*}
\sum_{k=0}^{m_{n}-1}\left( W\left( z_{k+1}^{n}\right) -W\left(
z_{k}^{n}\right) \right) ^{2}\longrightarrow area\left( L_{\alpha \beta
}\right) ,
\end{equation*}
as $n\longrightarrow \infty$, holds in $L^{2}$.
\end{lemma}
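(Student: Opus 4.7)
The approach is to identify each increment $X_k := W(z_{k+1}^n) - W(z_k^n)$ as the random measure $\mathcal{M}$ of a rectangular slab $L_{z_k^n z_{k+1}^n}$, so that the sum $S_n := \sum_{k=0}^{m_n - 1} X_k^2$ becomes a sum of \emph{independent} squared centered Gaussians whose mean and variance are then straightforward to compute and to control through the mesh size $|L^n|$.

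First I would observe that since the path $(z_k^n)$ is componentwise nondecreasing, $\Omega_{0 z_k^n} \subset \Omega_{0 z_{k+1}^n}$, and therefore
\begin{equation*}
X_k = \mathcal{M}(\Omega_{0 z_{k+1}^n}) - \mathcal{M}(\Omega_{0 z_k^n}) = \mathcal{M}(L_{z_k^n z_{k+1}^n}),
\end{equation*}
by additivity of $\mathcal{M}$ on the disjoint decomposition $\Omega_{0 z_{k+1}^n} = \Omega_{0 z_k^n} \sqcup L_{z_k^n z_{k+1}^n}$. The slabs $L_{z_k^n z_{k+1}^n}$, $k=0,\dots,m_n-1$, are pairwise disjoint and telescope to $L_{\alpha\beta}$, so by the defining properties of $\mathcal{M}$ the $X_k$ are independent, centered Gaussian random variables with variances $v_k := \mu(L_{z_k^n z_{k+1}^n})$ satisfying $\sum_k v_k = \mu(L_{\alpha\beta}) = \mathrm{area}(L_{\alpha\beta})$.

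Taking expectations yields $\mathbb{E}[S_n] = \sum_k v_k = \mathrm{area}(L_{\alpha\beta})$, so $L^2$-convergence reduces to showing $\mathrm{Var}(S_n)\to 0$. Since the $X_k^2$ are independent and $\mathrm{Var}(X_k^2) = 2 v_k^2$ (from the Gaussian fourth-moment identity $\mathbb{E}[X_k^4] = 3 v_k^2$), I would estimate
\begin{equation*}
\mathbb{E}\bigl[ (S_n - \mathrm{area}(L_{\alpha\beta}))^2 \bigr] = \sum_{k=0}^{m_n - 1} 2 v_k^2 \leq 2 \bigl(\max_k v_k\bigr) \sum_k v_k = 2 |L^n|\, \mathrm{area}(L_{\alpha\beta}),
\end{equation*}
which tends to $0$ by the mesh hypothesis $|L^n|\to 0$.

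There is essentially no serious obstacle; the only point that requires care is the structural observation that consecutive rectangle increments can be rewritten as $\mathcal{M}$-masses of pairwise disjoint Borel slabs, which is what confers independence on the $X_k$. Once that is in place, the statement is simply the classical calculation that the variance of a sum of independent squared centered Gaussians collapses under mesh refinement.
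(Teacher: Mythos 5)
Your proof is correct and takes essentially the same route as the paper's: both center the squared increments (your $v_k$ are the paper's $\mathrm{area}\left(L_{z_k^n z_{k+1}^n}\right)$), kill the cross terms by independence of the Gaussian masses of disjoint $L$-shaped regions, and bound the remaining diagonal sum by $C\,|L^n|\,\mathrm{area}\left(L_{\alpha\beta}\right)\to 0$ using the Gaussian fourth-moment identity. You merely make explicit two points the paper leaves implicit, namely the identification $X_k=\mathcal{M}\left(L_{z_k^n z_{k+1}^n}\right)$ via disjoint telescoping slabs and the value $C=2$ of the constant.
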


\begin{proof}
Set
\begin{equation*}
R_{n}=\sum_{k=0}^{m_{n}-1}\left( W\left( z_{k+1}^{n}\right) -W\left(
z_{k}^{n}\right) \right) ^{2}.
\end{equation*}%
Then%
\begin{equation*}
R_{n}-area\left( L_{\alpha \beta }\right) =\sum_{k=0}^{m_{n}-1}\left[ \left(
W\left( z_{k+1}^{n}\right) -W\left( z_{k}^{n}\right) \right) ^{2}-area\left(
L_{z_{k}^{n}z_{k+1}^{n}}\right) \right] .
\end{equation*}%
Denoting,
\begin{equation*}
\rho_k = \left( W\left(z_{k+1}^{n}\right) -W\left( z_{k}^{n}\right) \right)
^{2}-area\left(L_{z_{k}^{n}z_{k+1}^{n}}\right)
\end{equation*}
we find
\begin{equation*}
\mathbb{E}\left[ \left( R_{n}-area\left( L_{\alpha \beta }\right) \right)
^{2}\right] =\sum_{k=0,j=0}^{m_{n}-1}\mathbb{E}(\rho_k \rho_j).
\end{equation*}%
For $k\neq j$, the term in the double sum equals $0$, according to the
independent increments, as $W(v)-W(u)$ and $\mathcal{N}\left( 0,area\left(
L_{uv}\right) \right) $. Hence%
\begin{equation*}
\mathbb{E}\left[ \left( R_{n}-area\left( L_{\alpha \beta }\right) \right)
^{2}\right] =\sum_{k=0}^{m_{n}}\mathbb{E}\left[ \left( Y_{k}^{2}-1\right)
^{2}\left[ area\left( L_{uv}\right) \right] ^{2}\right] ,
\end{equation*}%
where%
\begin{equation*}
Y_{k}=Y_{k}^{n}=\frac{W\left( z_{k+1}^{n}\right) -W\left( z_{k}^{n}\right) }{%
\sqrt{Area\left( L_{z_{k}^{n}z_{k+1}^{n}}\right)}},
\end{equation*}
is a Gaussian process $\mathcal{N}\left( 0,1\right)$.
Therefore, for some constant $C$, we have%
\begin{equation*}
\mathbb{E}\left[ \left( R_{n}-area\left( L_{\alpha \beta }\right) \right)
^{2}\right] \leq C\sum_{k=0}^{m_{n}-1}\left[ area\left(
L_{z_{k}^{n}z_{k+1}^{n}}\right) \right] ^{2}\leq
\end{equation*}%
\begin{equation*}
\leq C\left\vert L^{n}\right\vert area\left( L_{\alpha \beta }\right)
\longrightarrow 0\text{, as }n\longrightarrow \infty .
\end{equation*}
\end{proof}

Let $\Gamma^n = \{0<z_1<...<z_{m_n} = z\}$ be a partition of the curve $%
\gamma_{0z}$, i.e., $z^n_k(s^n_k,t^n_k)\in \gamma_{0z}$, and $|\Gamma^n| =
\max_k \hbox{aria}\,\, (\Omega_{z^n_{k+1}}\setminus \Omega_{z^n_{k}})$.
Note, according It\^ o definition,
\begin{equation*}
R_n = \sum_{k=0}^{m_n-1}\,W(z^n_k)\left(W(z^n_{k+1}) - W(z^n_k)\right).
\end{equation*}
Then, we have
\begin{equation*}
R_n =\frac{W^2(z)}{2} - \frac{1}{2} \sum_{k=0}^{m_n-1}\,\left(W(z^n_{k+1}) -
W(z^n_k)\right)^2.
\end{equation*}
When $|\Gamma^n|\to 0,$ as $n \to \infty$, based on the foregoing Lemma, we
find
\begin{equation*}
\sum_{k=0}^{m_n-1}\,\left(W(z^n_{k+1}) - W(z^n_k)\right)^2 \to st
\end{equation*}
in $L^2$, as $n \to \infty$, and the result is established.

The following examples are based on the complete integrability notion for
the stochastic curvilinear integrals and on \textit{It\^{o}-Udri\c ste
stochastic calculus rules} \cite{udriste02}
\begin{equation*}
dW_{t}^{a}~dW_{t}^{b}= \delta ^{ab}\,
c_\alpha(t) dt^\alpha, dW_{t}^{a}~dt^\alpha=dt^\alpha~dW_{t}^{a}=0,
dt^\alpha\,dt^\beta=0,
\end{equation*}
for any $a,b=\overline{1,d}; \alpha, \beta = \overline{1,m}$, where $\delta
^{ab}$ is the \textit{Kronecker symbol}, $c_\alpha(t)= \frac{\partial}{%
\partial t^\alpha}(t^1\cdots t^m)$ and the tensorial product $\delta
^{ab}\,c_\alpha(t)$ represents the \textit{correlation coefficients}.

\subsection{Examples of path independent processes}

1) \textbf{Stock prices}. The idea to reconsider applications in Finance via the
multitime stochastic calculus was inspired by the work given in \cite{Ok:03}.
Let $(P_{t})_{t\in \mathbb{R}_{+}^{2}}$ denote the
price of a stock at two-time $t=(t^{1},t^{2})\in \mathbb{R}_{+}^{2}$, where $t^1$ means time
and $t^2$ represents a "space" variable (as example, showing the price evolution as function of the distance 
between supplier and seller). In this way, the stochastic perturbations involved in price dynamics
are modelled by a \textit{two-time (time-space) Brownian sheet}. We
model the evolution of the price $P_{t}$ supposing that the relative change 
$\frac{dP_{t}}{P_{t}}$ in price $P_{t}$ is involved in the SDE
\begin{equation*}
\frac{dP_{t}}{P_{t}}=\mu _{\alpha }dt^{\alpha }+\sigma _{\alpha }dW^{\alpha},
\end{equation*}
for constant drift vector $\mu _{\alpha }>0$ and constant diffusion vector $
\sigma _{\alpha }$. Hence
\begin{equation*}
{dP}_{t}=P_{t}\mu _{\alpha }dt^{\alpha }+P_{t}\sigma _{\alpha }dW^{\alpha }.
\end{equation*}%
Using It\^{o}-Udri\c{s}te formula
\begin{equation*}
d[\ln \,P_{t}]=\frac{dP_{t}}{P_{t}}-\frac{1}{2}\frac{P_{t}^{2}\sigma
_{\alpha }\sigma _{\beta }\delta ^{\alpha \beta }c_{\lambda }dt^{\lambda }}{
P_{t}^{2}}=\left( \mu _{\lambda }-\frac{1}{2}\sigma _{\alpha }\sigma _{\beta
}\delta ^{\alpha \beta }c_{\lambda }(t)\right) dt^{\lambda }+\sigma _{\alpha
}dW^{\alpha },
\end{equation*}%
we find
\begin{equation*}
P_t=p_{0}e^{\mu _{\lambda }t^{\lambda }-\frac{1}{2}\sigma _{\alpha }\sigma
_{\beta }\delta ^{\alpha \beta }t^{1}t^{2}+\sigma _{\alpha }W^{\alpha }}.
\end{equation*}%
The price $P_t$ is always positive, if $p_{0}>0.$ Since
\begin{equation*}
P_t=p_{0}+\int_{\gamma _{0t}}P_s\mu _{\alpha }ds^{\alpha }+\int_{\gamma
_{0t}}P_s\sigma _{\alpha }dW^{\alpha }
\end{equation*}%
and $\mathbb{E}\left[ \int_{\gamma _{0t}}P_s\sigma _{\alpha }dW^{\alpha }%
\right] =0$, we find
\begin{equation*}
\mathbb{E}[P_{t}]=p_{0}+\int_{\gamma _{0t}}\mathbb{E}[P_{s}]\mu _{\alpha}ds^{\alpha }.
\end{equation*}%
Hence
\begin{equation*}
\mathbb{E}[P_t]=p_{0}e^{\mu _{\alpha }t^{\alpha }},\,t^{\alpha }\geq 0.
\end{equation*}%
This expected value of the stock price is the same with the deterministic
solution of the completely integrable Pfaff equation
\begin{equation*}
{dP}_{t}=P_{t}\mu _{\alpha }dt^{\alpha }.
\end{equation*}

2) Let $t=(t^{1},t^{2})\in \mathbb{R}_{+}^{2}$, $x\in \mathbb{R}$ and
$g_{\alpha }:\mathbb{R}_{+}^{2}\rightarrow \mathbb{R},\,\alpha =1,2$ be two
continuous functions. Let $c_{1}(t)=t^{2},c_{2}(t)=t^{1}$ and ${\gamma _{0t}}$
be an increasing curve in $\Omega _{0T}$. The unique solution of the stochastic differential
equation $dx_{t}=x_{t}g_{\alpha }(t)dW_{t}^{\alpha },\,x(0)=1$ is
\begin{equation*}
x_t=e^{-\frac{1}{2}\int_{\gamma _{0t}}g_{\alpha }(s)g_{\beta }(s)\delta
^{\alpha \beta }c_{\lambda }(s)ds^{\lambda }+\int_{\gamma _{0t}}g_{\alpha
}(s)dW_{s}^{\alpha }},
\end{equation*}%
if the (usual and stochastic) curvilinear integrals are path independent,
i.e., $g_{\alpha }(t)=h_{\alpha }(t^{1}t^{2})$. To verify the solution, we
note that the stochastic process
\begin{equation*}
y_t=-\frac{1}{2}\int_{\gamma _{0t}}g_{\alpha }(s)g_{\beta }(s)\delta
^{\alpha \beta }c_{\lambda }(s)ds^{\lambda }+\int_{\gamma _{0t}}g_{\alpha
}(s)dW_{s}^{\alpha }
\end{equation*}%
satisfies
\begin{equation*}
dy_{t}=-\frac{1}{2}g_{\alpha }(t)g_{\beta }(t)\delta ^{\alpha \beta
}c_{\lambda }(t)dt^{\lambda }+g_{\alpha }(t)dW_{t}^{\alpha }.
\end{equation*}%
Applying the It\^{o}-Udri\c{s}te Lemma for $u(x)=e^{x}$, we obtain
\begin{equation*}
dx_{t}=\frac{\partial u}{\partial x}\,dy_{t}+\frac{1}{2}\frac{\partial ^{2}u%
}{\partial x^{2}}\,g_{\alpha }(t)g_{\beta }(t)\delta ^{\alpha \beta
}c_{\lambda }(t)dt^{\lambda }
\end{equation*}%
\begin{equation*}
=e^{y_{t}}\left( -\frac{1}{2}g_{\alpha }(t)g_{\beta }(t)\delta ^{\alpha
\beta }c_{\lambda }(t)dt^{\lambda }+g_{\alpha }(t)dW_{t}^{\alpha }+\frac{1}{2%
}g_{\alpha }(t)g_{\beta }(t)\delta ^{\alpha \beta }c_{\lambda
}(t)dt^{\lambda }\right)
\end{equation*}%
and hence
\begin{equation*}
dx_{t}=x_{t}g_{\alpha }(t)dW_{t}^{\alpha }.
\end{equation*}

3) The formula
\begin{equation*}
\frac{1}{2} W_t^2 = \frac{1}{2} W_0^2 + \frac{1}{2}\int_{\gamma_{0t}}d(s^1%
\cdots s^m) + \int_{\gamma_{0t}}W_s dW_s.
\end{equation*}
and the notations $x_t = \frac{1}{2} W_t^2, u(t) = \frac{1}{2}, v(t)= W_t$
motivate the following

\begin{definition}
A completely integrable stochastic process $x_{t}$ of the form
\begin{equation*}
x_{t}=x_{0}+\int_{\gamma _{0t}}u(s,\omega )\,d(s^{1}\cdots
s^{m})+\int_{\gamma _{0t}}v(s,\omega )\,dW_{s},
\end{equation*}%
where $u(t),\,v(t)$ satisfy the integrability conditions
\begin{equation*}
\mathbb{E}\,\int_{\gamma _{0t}}|u(s)|\,||c(s)||\,d\sigma <\infty ,\,\,%
\mathbb{E\,}\int_{\gamma _{0t}}v^{2}(s)\,d(s^{1}\cdots s^{m})<\infty ,
\end{equation*}%
is called a \textit{volumetric stochastic process}.
\end{definition}

The volumetric stochastic process can be written also as a stochastic Pfaff equation
\begin{equation*}
dx_{t}=u(t)\,d(t^{1}\cdots t^{m})+v(t)\,dW_{t}.
\end{equation*}
To motivate the adjective "volumetric" we need the following

\begin{lemma}
\textit{The curvilinear integrals $$\int_{\gamma_{0t}}u(s,\omega)\,d(s^1%
\cdots s^m)\,\, \hbox{and}\,\, \int_{\gamma_{0t}}v(s,\omega)\,dW_s$$ are path independent
if and only if $u(s,\omega) = \varphi (s^1\cdots s^m, \omega)$, respectively
$v(s,\omega)$ $ = \psi (s^1\cdots s^m, \omega)$, i.e., the functions $u$ and $v$ depend on the point $(s^{1},...,s^{m})$ only through the product of
components $s^{1}\cdots s^{m}$.}
\end{lemma}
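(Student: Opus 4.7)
The plan is to treat the two integrals separately, since the stochastic statement will, via the It\^{o} isometry, reduce to a path-independence question for an ordinary 1-form of the type already handled.

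For the classical integral, I write $d(s^{1}\cdots s^{m})=c_{\alpha}(s)\,ds^{\alpha}$ with $c_{\alpha}(s)=\partial_{\alpha}(s^{1}\cdots s^{m})=\prod_{\beta\neq\alpha}s^{\beta}$, so that $\int_{\gamma_{0t}}u(s,\omega)\,d(s^{1}\cdots s^{m})=\int_{\gamma_{0t}}u\,c_{\alpha}\,ds^{\alpha}$ is the curvilinear integral of the smooth 1-form $\omega_{u}=uc_{\alpha}\,ds^{\alpha}$ on the simply connected parallelepiped $\Omega_{0T}$. Path independence for every increasing $C^{1}$ curve is then equivalent to the closedness condition $\partial_{\beta}(uc_{\alpha})=\partial_{\alpha}(uc_{\beta})$. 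Since $\partial_{\beta}c_{\alpha}=\partial_{\beta}\partial_{\alpha}(s^{1}\cdots s^{m})=\partial_{\alpha}c_{\beta}$ by symmetry of mixed partials, the terms $u\,\partial_{\beta}c_{\alpha}$ cancel and what survives is $c_{\alpha}\partial_{\beta}u=c_{\beta}\partial_{\alpha}u$ for all $\alpha,\beta$. On the interior of $\Omega_{0T}$, where every $c_{\alpha}$ is strictly positive, this says exactly that $\nabla u$ is proportional to $\nabla p$, where $p(s)=s^{1}\cdots s^{m}$; hence $\nabla u$ is tangent to the level sets of $p$, and by connectedness of these level hypersurfaces $u$ is constant on each of them, so $u(s,\omega)=\varphi(p(s),\omega)$ for some $\varphi$. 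The converse is the immediate change of variable $r=p(\gamma(\tau))$, reducing the integral to $\int_{0}^{t^{1}\cdots t^{m}}\varphi(r,\omega)\,dr$.

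For the stochastic integral, I would invoke the It\^{o} isometry recorded in Remark \ref{remarca_01}:
\[
\mathbb{E}\left[\left(\int_{\gamma_{0t}}v(s,\omega)\,dW_{s}\right)^{2}\right]=\mathbb{E}\left[\int_{\gamma_{0t}}v^{2}(s,\omega)\,d(s^{1}\cdots s^{m})\right].
\]
If the stochastic integral is path-independent as a random variable for every $t$, then so is its $L^{2}$-norm, hence so is the deterministic curvilinear integral $\int_{\gamma_{0t}}v^{2}(s,\omega)\,d(s^{1}\cdots s^{m})$ (at least in the mean, and by localisation along a predictable modification, for almost every $\omega$). Applying the first part of the lemma pathwise to $v^{2}$ then gives $v^{2}(s,\omega)=\Psi(p(s),\omega)$, and a standard sign/measurability argument upgrades this to $v(s,\omega)=\psi(p(s),\omega)$. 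Conversely, given $v(s,\omega)=\psi(p(s),\omega)$, parametrise $\gamma_{0t}$ by $\tau$ and pass to the variable $r=p(\gamma(\tau))$: the continuous martingale $B_{\tau}=W_{\gamma(\tau)}$ has quadratic variation $r(\tau)$, so by Dambis-Dubins-Schwarz $B_{\tau}=\tilde{W}_{r(\tau)}$ for a standard Brownian motion $\tilde{W}$, and the stochastic integral reduces to $\int_{0}^{t^{1}\cdots t^{m}}\psi(r,\omega)\,d\tilde{W}_{r}$, whose dependence on $\gamma$ is only through its endpoint.

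The hard part is the necessity half for the stochastic integral: equality of $L^{2}$-norms only constrains $v^{2}$, not the random variable $\int v\,dW$ itself. The cleanest route to close this gap exploits the orthogonal-increment structure of $W$ over disjoint cells of $\Omega_{0T}$: the difference of the stochastic integrals along two increasing curves with the same endpoints can be written as a sum of stochastic integrals over the cells bounded by the two curves, and requiring this difference to vanish a.s. for every such pair forces $v$, not merely $v^{2}$, to factor through $p$. Everything else is either a direct application of the Poincar\'e lemma on the star-shaped domain $\Omega_{0T}$ or a routine change of variable.
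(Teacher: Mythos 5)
Your treatment of the ordinary curvilinear integral is correct and is essentially the paper's own argument: the same closedness condition $\partial_\beta(u\,c_\alpha)=\partial_\alpha(u\,c_\beta)$, the same cancellation via $\partial_\beta c_\alpha=\partial_\alpha c_\beta$, yielding $c_\alpha\,\partial_\beta u=c_\beta\,\partial_\alpha u$. You then spell out the step the paper leaves implicit --- that on the interior, where each $c_\alpha>0$, proportionality of $\nabla u$ to $\nabla(s^1\cdots s^m)$ forces $u$ to be constant on the connected level hypersurfaces of the product, hence $u(s,\omega)=\varphi(s^1\cdots s^m,\omega)$ --- and the change of variable for the converse. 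That half is fine, indeed slightly more complete than the paper's.

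The stochastic half is where there is a genuine gap. Note first that the paper does not prove this half either: it disposes of it in one sentence by citing Cairoli and Walsh \cite{cairoli&walsh}. Your attempted replacement is incomplete in both directions. For necessity, as you yourself concede, the It\^{o} isometry of Remark \ref{remarca_01} constrains only $v^2$, and your proposed repair --- decomposing the difference of the integrals along two curves into stochastic integrals over the cells between them and using orthogonality of increments --- is precisely the Cairoli--Walsh style of argument, but in your write-up it remains a one-sentence sketch, so necessity is not established. More seriously, your sufficiency argument via Dambis--Dubins--Schwarz proves the wrong kind of equality. The Brownian motion $\tilde{W}$ produced by DDS from $B_\tau=W_{\gamma(\tau)}$ depends on the curve: $\tilde{W}_r$ is the value of the sheet at the point where $\gamma$ crosses the level set $\{s^1\cdots s^m=r\}$, and two increasing curves cross that level set at different points, where the sheet takes different (merely correlated, not equal) values. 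So $\int_0^{p(t)}\psi(r,\omega)\,d\tilde{W}^{(1)}_r$ and $\int_0^{p(t)}\psi(r,\omega)\,d\tilde{W}^{(2)}_r$ agree in law but not, on the strength of your argument, almost surely --- whereas path independence in this paper is an almost-sure identity between random variables, as in the theorem asserting $\int_{\gamma_{0t}}W_s\,dW_s=\frac{1}{2}W_t^2-\frac{1}{2}\,t^1\cdots t^m$ for every increasing curve. Closing sufficiency also requires the cell/Green-type decomposition, i.e., exactly the Cairoli--Walsh machinery the paper invokes by citation; until that is carried out, your proof of the stochastic equivalence is a program rather than an argument.
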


\begin{proof}
The first curvilinear integral is path independent if and only if
\begin{equation*}
\frac{\partial}{\partial s^\beta}(u(s) c_\alpha(s)) = \frac{\partial}{%
\partial s^\alpha}(u(s) c_\beta(s)).
\end{equation*}
Since
\begin{equation*}
\frac{\partial c_\alpha}{\partial s^\beta}(s) = \frac{\partial c_\beta}{%
\partial s^\alpha}(s),
\end{equation*}
it follows
\begin{equation*}
\frac{\partial u}{\partial s^\beta}(s)c_\alpha(s) = \frac{\partial u}{%
\partial s^\alpha}(s) c_\beta(s),
\end{equation*}
i.e., $u(s) = \varphi (s^1\cdots s^m)$.

As was shown by Cairoli and Walsh \cite{cairoli&walsh}, the (second)
stochastic curvilinear integral is path independent if and only if $%
v(s)=\psi (s^{1}\cdots s^{m})$.
\end{proof}

In the following theorem we state a formula which is very useful for
computing curvilinear It\^{o} integrals. It can be thought as the
fundamental theorem of multitime stochastic calculus.

\begin{theorem}
\textit{Let $x_t, \,t = (t^1,...,t^m)\in R^m_+$ be a volumetric stochastic process, i.e.,
\begin{equation*}
dx_t = u(t)\,d(t^1\cdots t^m) + v(t)\,dW_t.
\end{equation*}
If $g(t,x)$ is a $C^2$ function, then the stochastic process $y_t = g(t,x_t)$
is completely integrable, with
\begin{equation*}
dy_t = \frac{\partial g}{\partial t^\alpha}(t,x_t) dt^\alpha + \frac{%
\partial g}{\partial x}(t,x_t) dx_t + \frac{1}{2} \frac{\partial^2 g}{%
\partial x^2}(t,x_t) (dx_t)^2,
\end{equation*}
where for computing $(dx_t)^2$ we use the following formal rules \cite%
{udriste02}
\begin{equation*}
dW_{t}~dW_{t}=d(t^1\cdots t^m)=c_\lambda(t)\,dt^\lambda,
\,dW_{t}~dt^\alpha=dt^\alpha~dW_{t}=0,\, dt^\alpha\,dt^\beta=0.
\end{equation*}%
}
\end{theorem}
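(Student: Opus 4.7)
The plan is to reduce the multitime statement to the classical one-parameter It\^o formula along an arbitrary increasing $C^1$ curve $\gamma_{0t}$, and then verify that the resulting expression is curve-independent so that it can be legitimately written as a multitime differential. Fix such a curve $\gamma:[0,\tau_0]\to \Omega_{0T}$ with $\gamma(0)=0$, $\gamma(\tau_0)=t$, and set $\Phi(\tau)=\gamma^1(\tau)\cdots\gamma^m(\tau)$. Define the one-parameter processes $B_\tau:=W_{\gamma(\tau)}$, $X_\tau:=x_{\gamma(\tau)}$, $Y_\tau:=g(\gamma(\tau),X_\tau)$. By Definition \ref{definitia.integralei} and the Quadratic Variation Lemma, $B_\tau$ is a continuous martingale (w.r.t.\ the induced filtration) of quadratic variation $\langle B\rangle_\tau=\Phi(\tau)$, hence $d\langle B\rangle_\tau=\Phi'(\tau)\,d\tau=c_\lambda(\gamma(\tau))\dot\gamma^\lambda(\tau)\,d\tau$.

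Along the curve, the volumetric equation becomes the classical It\^o SDE
\begin{equation*}
dX_\tau = u(\gamma(\tau))\,\Phi'(\tau)\,d\tau + v(\gamma(\tau))\,dB_\tau,
\end{equation*}
whose drift and diffusion coefficients satisfy the integrability bounds assumed in the definition of a volumetric process. Apply the standard one-parameter It\^o formula to the $C^{1,2}$-function $(\tau,x)\mapsto g(\gamma(\tau),x)$ and to $X_\tau$: this yields
\begin{equation*}
dY_\tau=\frac{\partial g}{\partial t^\alpha}\dot\gamma^\alpha\,d\tau+\frac{\partial g}{\partial x}\,dX_\tau+\tfrac12\frac{\partial^2 g}{\partial x^2}\,v^2(\gamma(\tau))\,d\langle B\rangle_\tau.
\end{equation*}
Substituting $d\langle B\rangle_\tau=c_\lambda(\gamma(\tau))\dot\gamma^\lambda\,d\tau$ and integrating from $0$ to $\tau_0$ produces
\begin{equation*}
g(t,x_t)-g(0,x_0)=\int_{\gamma_{0t}}\!\frac{\partial g}{\partial t^\alpha}ds^\alpha+\int_{\gamma_{0t}}\!\frac{\partial g}{\partial x}\,dx_s+\tfrac12\int_{\gamma_{0t}}\!\frac{\partial^2 g}{\partial x^2}v^2(s)\,c_\lambda(s)\,ds^\lambda,
\end{equation*}
where the middle integral is split into an ordinary curvilinear piece involving $u$ and a stochastic curvilinear piece against $W$ in the sense of Section \ref{IntCurb}.

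The remaining step is to check that the right-hand side is independent of the curve $\gamma$, so that it indeed represents a (completely integrable) multitime differential and can be encoded by the formal rules stated in the theorem. The deterministic curvilinear integrals are path-independent because their integrand fields are exact differentials of $g(\cdot,x_s)$ in the $t$-variables and of the corresponding volumetric primitives built from $u,v^2 \partial_x^2 g$; path-independence of the stochastic piece follows from the theorem of the previous subsection together with the Lemma characterizing path-independent volumetric integrands, applied after conditioning on the $x_s$ factors. Finally, interpreting $\partial^2_x g\cdot v^2\,c_\lambda\,dt^\lambda$ as $\tfrac12\partial_x^2 g\,(dx_t)^2$ via the formal multiplication table $dW_t\,dW_t=c_\lambda(t)\,dt^\lambda$, $dW_t\,dt^\alpha=0$, $dt^\alpha dt^\beta=0$ gives exactly the announced formula.

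The principal obstacle is the quadratic-variation step: one must ensure that along the curve the process $B_\tau$ really is a continuous one-parameter martingale with the stated sharp bracket $\Phi(\tau)$, and that the cross-terms coming from the mixed differentials $dt^\alpha\,dW_t$ contribute nothing. Both facts rest on the Cairoli--Walsh type computation reproduced in the Quadratic Variation Lemma above (extended to general $m$ by the same telescoping argument, using that the increments $W(L_{z^n_k z^n_{k+1}})$ are independent Gaussians of variance equal to the volume of the rectangular layer), and on the fact that $\int_{\gamma_{0t}}\!\psi\,ds^\alpha$ has zero quadratic variation whenever $\psi$ is adapted and continuous. Once this is in place, the rest is bookkeeping.
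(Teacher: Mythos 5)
The paper offers no proof of this theorem at all: it is stated as the ``fundamental theorem of multitime stochastic calculus'' and justified only by the formal It\^o--Udri\c ste multiplication rules, the single proved special case being $g(t,x)=\tfrac12 x^2$, $x_t=W_t$ (the path-independence theorem for $\int_{\gamma_{0t}}W_s\,dW_s$, established via the Quadratic Variation Lemma). So there is nothing to compare line-by-line; your strategy of supplying the missing argument is the natural one, and its core is sound: for an increasing $C^1$ curve $\gamma$, the process $B_\tau=W_{\gamma(\tau)}$ is a continuous martingale for the filtration $\mathcal{F}_{\gamma(\tau)}$ (this uses the $\mathcal{F}_t$-Wiener property together with monotonicity of $\gamma$), its Gaussian covariance $\Phi(\min(\tau,\tau'))$ with $\Phi(\tau)=\gamma^1(\tau)\cdots\gamma^m(\tau)$ gives $\langle B\rangle_\tau=\Phi(\tau)$, consistent with Definition \ref{definitia.integralei}, and the classical one-parameter It\^o formula applied to $(\tau,x)\mapsto g(\gamma(\tau),x)$ produces exactly the announced curvilinear-integral identity along $\gamma$ (piecewise $C^1$ curves are handled by concatenation, and there are no problematic mixed terms since the drift part has finite variation in $\tau$).

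Your final step, however, is wrong as argued. You try to show that \emph{each} curvilinear integral on the right-hand side is separately path independent, invoking the Lemma characterizing path-independent integrands ``after conditioning on the $x_s$ factors.'' That Lemma requires the integrand to depend on $s$ only through the product $s^1\cdots s^m$, which fails for general $g$, and separate path independence is in fact false: take $m=2$, $x_t=W_t$ (so $u=0$, $v=1$) and $g(t,x)=t^1x$; the stochastic piece is $\int_{\gamma_{0t}}s^1\,dW_s$, whose integrand $s^1$ is not a function of $s^1s^2$, hence by the Cairoli--Walsh criterion this integral is \emph{not} path independent --- only the sum $\int_{\gamma_{0t}}W_s\,ds^1+\int_{\gamma_{0t}}s^1\,dW_s=t^1W_t$ is, which is precisely the paper's integration-by-parts example. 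Also, ``conditioning on the $x_s$ factors'' is not an operation to which that Lemma applies. Fortunately the step is unnecessary: complete integrability, in the sense of the paper's definition, asks only that the integral relation hold for \emph{every} increasing curve with one and the same left-hand side. Since a volumetric process is by definition completely integrable, $x_t$ is a genuine $m$-sheet, so $g(t,x_t)-g(0,x_0)$ is curve-free, and your one-parameter It\^o computation establishes the identity along each curve separately; the path independence of the sum is automatic. Replace your last paragraph-and-a-half by this one-line observation and the proof is complete.
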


Explicitly,
\begin{equation*}
dy_t = \left(\frac{\partial g}{\partial t^\alpha}(t,x_t) + \frac{\partial g}{%
\partial x}(t,x_t) u(t)c_\alpha(t) + \frac{1}{2} \frac{\partial^2 g}{%
\partial x^2}(t,x_t) v^2(t)c_\alpha(t)\right)dt^\alpha
\end{equation*}
\begin{equation*}
+ \frac{\partial g}{\partial x}(t,x_t) v(t)dW_t.
\end{equation*}
In terms of curvilinear integrals it reads as follows
\begin{equation*}
g(t,x_t) = g(0,x_0)
\end{equation*}
\begin{equation*}
+ \int_{\gamma_{0t}} \left(\frac{\partial g}{\partial s^\alpha}(s,x_s) +
\frac{\partial g}{\partial x}(s,x_s) u(s)c_\alpha(s) + \frac{1}{2} \frac{%
\partial^2 g}{\partial x^2}(s,x_s) v^2(s)c_\alpha(s)\right)ds^\alpha
\end{equation*}
\begin{equation*}
+ \int_{\gamma_{0t}}\frac{\partial g}{\partial x}(s,x_s) v(s)dW_s.
\end{equation*}

4) \textbf{Integration by parts} Let ${\gamma _{0t}}$ be an increasing curve
in $\Omega _{0T}$. If $g(t,x)=(t^{1}\cdots t^{m})\,x$, then the stochastic
process $y_{t}=g(t,x_{t})$ satisfies
\begin{equation*}
dy_{t}=x_{t}\,d(t^{1}\cdots t^{m})+(t^{1}\cdots t^{m})\,dx_{t}.
\end{equation*}%
Replacing $x_{t}=W_{t}$, we find the "\textit{integration by parts}" formula
\begin{equation*}
(t^{1}\cdots t^{m})\,W_{t}=\int_{\gamma _{0t}}W_{s}\,d(s^{1}\cdots
s^{m})+\int_{\gamma _{0t}}(s^{1}\cdots s^{m})\,dW_{s}.
\end{equation*}

Generally, if $g(t,x) = \varphi(t)\, x$, where $\varphi(t)$ is continuous
and of bounded variation in $[0, t]$, then the stochastic process $y_t =
g(t,x_t)$ satisfies
\begin{equation*}
dy_t = x_t \,d\varphi(t) + \varphi(t)\,dx_t.
\end{equation*}
Replacing $x_t = W_t$, we find the integration by parts formula
\begin{equation*}
\varphi(t)\, W_t = \int_{\gamma_{0t}} W_s \,d\varphi(s) + \int_{\gamma_{0t}}
\varphi(s) \,dW_s.
\end{equation*}

5) \textbf{Geometric Brownian Sheet} It is a stochastic process of the form
\begin{equation*}
S_t=e^{\mu _{\alpha }t^{\alpha }+\sigma B_t},\,t=(t^{1},...,t^{m})\in \Omega
_{0T}\subset
%TCIMACRO{\U{211d} }%
%BeginExpansion
\mathbb{R}
%EndExpansion
_{+}^{m},
\end{equation*}%
where $\mu =(\mu _{\alpha })\in
%TCIMACRO{\U{211d} }%
%BeginExpansion
\mathbb{R}
%EndExpansion
^{m}$, $\sigma >0$ and $(B_{t})_{t\in \Omega _{0T}}$ is a standard multitime
Wiener process. Since $S_t=f(t,B_t)$, for $f(t,x)=e^{\mu _{\alpha
}t^{\alpha }+\sigma x}$ and
\begin{equation*}
\frac{\partial f}{\partial t^{\alpha }}=f\mu _{\alpha },\,\frac{\partial f}{%
\partial x}=f\sigma ,\,\frac{\partial ^{2}f}{\partial x^{2}}=f\sigma ^{2},
\end{equation*}%
the It\^{o}-Udri\c{s}te formula shows that the geometric Brownian sheet is a
solution of the multitime stochastic Pfaff equation
\begin{equation*}
dS_t=\sigma S_t\,dB_t+S_t\mu _{\alpha }\,dt^{\alpha }+\frac{1}{2}\sigma
^{2}S_t\,d(t^{1}\cdots t^{m}).
\end{equation*}

\begin{definition}
\textit{(i)} Let $x(t, \omega)$ be an $m$-sheet solution of (\ref{0201}), $t\in
\Omega _{0T}$, $\omega \in \Omega$. \textit{Sheetwise uniqueness} of $%
x\left( \cdot, \omega \right) $ means that if $\overline{x}\left( \cdot,
\omega \right):\Omega_{0T}\rightarrow {\mathbb{R}}^n$ is also an $m$-sheet
solution of (\ref{0201}), on the filtered probability space endowed with the
same Wiener process and initial random variable, then
\begin{equation*}
\mathbb{P} \left[ x\left( t,\omega \right)=\overline{x}\left( t,\omega
\right), \forall t\in \Omega _{0T} \right] =1.
\end{equation*}

\textit{(ii)} Let $x(\cdot,\omega)$ be a curve solution of (\ref{0201}) with $\omega
\in \Omega $. \textit{Pathwise uniqueness} of $x(\cdot,\omega)$ means that
if $\overline{x}(\cdot,\omega)$ is also a curve solution of (\ref{0201}), on
the filtered probability space endowed with the same Wiener process and
initial random variable, then
\begin{equation*}
\mathbb{P~}\left[ x(\tau,\omega)=\overline{x}(\tau,\omega),\forall \tau \in
[0, \tau_0]\right] =1.
\end{equation*}
\end{definition}

\section{Multitime It\^{o} - Udri\c ste product formula and \\adjoint stochastic
multitime Pfaff systems} \label{SistAdjuncte}

%%%%%%%%%%%%%%%%%%%
Let $\Omega _{0T}$ be the\textit{\ parallelepiped }fixed by the
diagonal opposite points $0=\left( 0,...,0\right) $ and $T=\left(
T^{1},...,T^{m}\right) $ and $t\in \Omega _{0T}$ be the
\textit{multitime}. Given a filtred probability space $\left( \Omega
,\mathcal{F},\left( \mathcal{F}_{t}\right) _{t\in \Omega
_{0T}},\mathbb{P}\right) $ satisfying the usual conditions, on which
a Wiener process $W\left( \cdot ,\omega \right) $ with values in
$\mathbb{R}^{d}$ is defined, consider a \textit{controlled multitime stochastic differential system}%
\begin{equation}
\left\{
\begin{array}{l}
\displaystyle dx_{t}^{i}=\mu _{\alpha }^{i}\left( t,x_{t},u_{t}\right)
dt^{\alpha }+\sigma _{a}^{i}\left( t,x_{t},u_{t}\right) dW_{t}^{a}, \\
\displaystyle x\left( 0\right) =a\in \mathbb{R}^{n},%
\end{array}%
\right.  \label{rel_000_02}
\end{equation}
where
\begin{equation*}
\mu \left( \cdot ,x\left( \cdot ,\omega \right) ,u\left( \cdot ,\omega
\right) \right) =(\mu _{\alpha }^{i}):\Omega _{0T}\times \mathbb{R}%
^{n}\times U\longrightarrow \mathbb{R}^{n\times m},
\end{equation*}%
\begin{equation*}
\sigma \left( \cdot ,x\left( \cdot ,\omega \right) ,u\left( \cdot ,\omega
\right) \right) =(\sigma _{a}^{i}):\Omega _{0T}\times \mathbb{R}^{n}\times
U\longrightarrow \mathbb{R}^{n\times d}
\end{equation*}%
and, for simplicity, we denote $x\left( t,\omega \right) $, respectively $%
u\left( t,\omega \right) $, by $x_{t}$ and $u_{t}$. Here, $u_{t}\in U\subset
\mathbb{R}^{k}$ is a parameter whose value we can choose in the given Borel
set $U$ at any instant multitime $t$ in order to control the process $x_{t}$%
. Thus, $u_{t}=u\left( t,\omega \right) $ is a stochastic process, called
\textit{control }(vector-valued) \textit{variable} or, simplified,\textit{\
control}. Since our decision at multitime $t$ must be based upon what has
happened up to multitime $t$, the function $\omega \longrightarrow u\left(
t,\omega \right) $ must (at least) be measurable w.r.t. $\mathcal{F}_{t}$,
i.e. the process $u_{t}$ must be $\mathcal{F}_{t}-$adapted. We also assume
that $u\left( t,\omega \right) $ is satisfying RL hypothesis. In addition we
require that $u\left( t,\omega \right) $ gives rise to a unique solution $%
x\left( t\right) =x^{\left( u\right) }\left( t\right) $ of (\ref{rel_000_02}%
) for $t\in \Omega _{0T}$, i.e., the system (\ref{rel_000_02}) is
completely integrable. Let us denote by $\mathcal{A}$ the set of all
controls with the above properties. Any $u\left( \cdot ,\omega \right) \in
\mathcal{A} $ is called also a \textit{feasible control}.

\begin{definition}
\label{adm_contr}Let $\left( \Omega ,\mathcal{F},\left( \mathcal{F}%
_{t}\right) _{t\in \mathbb{R}_{+}},\mathbb{P}\right) $ be given satisfying the
usual conditions and let $W\left( t\right) $ be a given standard
$\left( \mathcal{F}_{t}\right) _{t\in \mathbb{R}_{+}}-$Wiener process
with values in $\mathbb{R}^{d}$. A control $u\left( \cdot ,\omega \right) $
is called \textit{admissible}, and the pair $\left( x\left( \cdot ,\omega \right) ,u\left(
\cdot ,\omega \right) \right) $ is called\textit{\ admissible}, if

\begin{enumerate}
\item $u\left( \cdot ,\omega \right) \in \mathcal{A}$;

\item $x\left( \cdot ,\omega \right) $ is the unique solution of system (\ref%
{rel_000_02});

\item some additional convex constraint on the terminal state variable are
satisfied, e.g.
\begin{equation*}
x\left( T,\omega \right) \in K,
\end{equation*}
where $K$ is a given nonempty convex subset in
$\mathbb{R}^{n}$.
\end{enumerate}
\end{definition}

The set of all admissible controls is denoted by $\mathcal{A}_{ad}$. 
We assume:

\label{ipoteza01}(\textbf{H1}) $\mu _{\alpha }^{i}$, $\sigma _{a}^{i}$, and
$f_{\alpha }$ are continuous in their arguments and continuously
differentiable in $\left( x,u\right) $, for every $i=\overline{1,n}$,
$\alpha =\overline{1,m}$, $a=\overline{1,d}$;

\label{ipoteza02}(\textbf{H2}) the derivatives of $\mu _{\alpha }^{i}$ and
$\sigma _{a}^{i}$ in $\left( x,u\right) $ are bounded for every $i=\overline{1,n}$,
$\alpha =\overline{1,m}$, $a=\overline{1,d}$;

\label{ipoteza03}(\textbf{H3}) the derivatives of $f_{\alpha }$ in $\left(
x,u\right) $ are bounded by $C\left( 1+\left\vert x\right\vert +\left\vert
u\right\vert \right) $, for every $\alpha =\overline{1,m}$ and the
derivative of $\Psi $ in $x$ is bounded by $C\left( 1+\left\vert
x\right\vert \right) $.

Then, for a given $u\left( \cdot ,\omega \right) \in \mathcal{A}_{ad}$,
there exists a unique solution $x\left( \cdot ,\omega \right) $ which solves the system
(\ref{rel_000_02}).
%%%%%%%%%%%%%%%%%%%

\subsection{Multitime It\^{o} - Udri\c ste product formula}

In order to prove the multitime stochastic maximum principle using the ideas
rising from the papers \cite{udriste02}, \cite{udr04}, \cite{udr0301}, \cite%
{udr03}, we need the following auxiliary result, which is a special case of
the multitime It\^{o} - Udri\c ste formulas \cite{udriste02}. The theory
covers both the problems formulated with complete integrable stochastic
systems and problems based on nonintegrable systems.

\begin{lemma}
\emph{(It\^{o} - Udri\c{s}te product formula)} \label{lemma copy(1)}Suppose
the It\^{o} process $\left( x_{t}^{i}\right) _{t\in \Omega _{0T}},$ $i=%
\overline{1,n}$ is solution of the multitime stochastic Pfaff system
\begin{equation*}
\left\{
\begin{array}{l}
\displaystyle dx_{t}^{i}=\mu _{\alpha }^{i}\left( t,x\left( t,\omega \right)
,u\left( t,\omega \right) \right) dt^{\alpha }+\sigma _{a}^{i}\left(
t,x\left( t,\omega \right) ,u\left( t,\omega \right) \right) dW_{t}^{a}, \\
\displaystyle x\left( 0,\omega \right) =x\in \mathbb{R}^{n},%
\end{array}%
\right.
\end{equation*}%
and the It\^{o} process $\left( p_{i}\left( t\right) \right) _{t\in
\Omega _{0T}}, \,\,i=\overline{1,n}$ is solution of the multitime
stochastic Pfaff system
\begin{equation*}
\left\{
\begin{array}{l}
\displaystyle dp_{i}\left( t\right) =a_{i\alpha }\left( t,x\left( t,\omega
\right) ,u\left( t,\omega \right) \right) dt^{\alpha }+q_{ia}\left(
t,x\left( t,\omega \right) ,u\left( t,\omega \right) \right) dW_{t}^{a}, \\
\displaystyle p\left( 0,\omega \right) =p\in \mathbb{R}^{n},%
\end{array}%
\right.
\end{equation*}%
where the coefficients in both evolutions are predictable processes and $u\left( \cdot
,\omega \right)$ is an admissible control. Then
the interior product $p_{i}(t)x^{i}(t)$ is an multitime It\^{o} process and
\begin{equation*}
d\left( p_{i}\left( t\right) x^{i}\left( t\right) \right)
=p_{i}dx^{i}+x^{i}dp_{i}+q_{ib}\sigma _{a}^{i}\delta ^{ab}c_{\alpha
}(t)dt^{\alpha }.
\end{equation*}
\end{lemma}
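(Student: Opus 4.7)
The plan is to treat $(x, p) = (x^1,\ldots, x^n, p_1,\ldots, p_n)$ as a combined $2n$-dimensional multitime It\^o process driven by the same Wiener sheet $(W^a)_{a=\overline{1,d}}$, and to apply the multitime It\^o--Udri\c ste formula (the vector-valued analogue of the chain rule stated earlier in the excerpt) to the bilinear scalar function $g(x,p) = p_i x^i$. The first step is to compute the relevant partial derivatives: $\partial g/\partial x^i = p_i$ and $\partial g/\partial p_i = x^i$, while all pure second derivatives vanish and the only nonzero second derivatives are the mixed ones $\partial^2 g/(\partial x^i \partial p_j) = \delta^j_i$.

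Next, the multitime It\^o--Udri\c ste formula (extended from one scalar process to a pair of vector-valued processes, which is a routine component-by-component repetition of the approximation argument used in the scalar case) yields
\begin{equation*}
d(p_i x^i) \;=\; p_i\, dx^i \;+\; x^i\, dp_i \;+\; \tfrac{1}{2}\bigl( 2\,\delta^j_i\, dx^i\, dp_j \bigr),
\end{equation*}
where the cross product $dx^i\, dp_j$ is reduced by the It\^o--Udri\c ste calculus rules
\begin{equation*}
dW^a\,dW^b = \delta^{ab}\, c_\alpha(t)\, dt^\alpha, \qquad dW^a\,dt^\alpha = 0, \qquad dt^\alpha\, dt^\beta = 0.
\end{equation*}
Substituting $dx^i = \mu^i_\alpha dt^\alpha + \sigma^i_a dW^a$ and $dp_j = a_{j\beta} dt^\beta + q_{jb} dW^b$ and keeping only the surviving $dW^a\,dW^b$ term, I obtain
\begin{equation*}
dx^i\, dp_j \;=\; \sigma^i_a\, q_{jb}\, \delta^{ab}\, c_\alpha(t)\, dt^\alpha,
\end{equation*}
so that contracting with $\delta^j_i$ yields the announced correction $q_{ib}\sigma^i_a \delta^{ab} c_\alpha(t) dt^\alpha$.

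The only real work is therefore justifying the vector-valued form of the underlying It\^o--Udri\c ste formula; once the scalar volumetric version has been established (as the paper does), this extension is obtained by the standard route: approximate $g$ by polynomials on compact sets, verify the formula for monomials by a partition-refinement argument in which the quadratic-variation Lemma delivers the $dW^a dW^b$ identity, and then pass to the limit using dominated convergence together with the integrability hypotheses on the predictable coefficients $\mu^i_\alpha, \sigma^i_a, a_{i\alpha}, q_{ia}$. The main obstacle I anticipate is the bookkeeping of the cross variation between two distinct components driven by the same multitime Wiener sheet, since one must confirm that only the $\delta^{ab}$-paired Wiener increments contribute and that all mixed $dt^\alpha\,dW^a$ terms vanish in the $L^2$ limit; however, this is precisely what the It\^o--Udri\c ste calculus rules encode, and applying them term by term closes the argument.
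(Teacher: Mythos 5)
Your proposal is correct and follows essentially the same route as the paper, which in fact states this lemma \emph{without} proof, presenting it simply as "a special case of the multitime It\^o--Udri\c ste formulas \cite{udriste02}"; your derivation --- applying the second-order expansion to $g(x,p)=p_ix^i$, noting that the only surviving second derivatives are the mixed ones, and reducing $dx^i\,dp_i$ via the formal rules $dW^a\,dW^b=\delta^{ab}c_\alpha(t)\,dt^\alpha$, $dW^a\,dt^\alpha=dt^\alpha\,dt^\beta=0$ --- is precisely that special case written out, and it reproduces the stated correction term $q_{ib}\sigma^i_a\delta^{ab}c_\alpha(t)\,dt^\alpha$. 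Your concluding paragraph sketching the rigorous underpinning (polynomial approximation, partition refinement via the quadratic-variation lemma, $L^2$ passage to the limit) remains a sketch, but since the paper offers no proof at all for this lemma, your argument is, if anything, more complete than the source.
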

This equality can be called the {\it stochastic differentiation formula for the interior product.}

\subsection{Adjoint multitime stochastic Pfaff systems}

For simplicity, we will omit $\omega$ as argument of processes.

\begin{definition}
\emph{(Variational multitime stochastic Pfaff system)} Let $u\left( \cdot
,\omega \right) $ be an admissible control. Let
\begin{equation*}
dx_{t}^{i}=\mu _{\alpha }^{i}\left( t,x_{t},u_{t}\right) dt^{\alpha
}+\sigma_{a}^{i}\left( t,x_{t},u_{t}\right) dW_{t}^{a}
\end{equation*}
be a multitime stochastic Pfaff evolution. The multitime stochastic system
\begin{equation}
d\xi _{t}^{i}=\left( \mu _{\alpha x^{j}}^{i}\left( t,x_{t},u_{t}\right)
dt^{\alpha }+\sigma _{ax^{j}}^{i}\left( t,x_{t},u_{t}\right)
dW_{t}^{a}\right) ~\xi ^{j}\left( t,\omega \right)  \label{evolutie}
\end{equation}
is called {\it stochastic variational multitime Pfaff system} with control $%
u\left( \cdot ,\omega \right) $.
\end{definition}

\begin{definition}
\emph{(Adjoint multitime stochastic Pfaff system)} Consider a multitime
stochastic Pfaff evolution as in (\ref{evolutie}). A linear multitime
stochastic system of the form%
\begin{equation*}
dp_{j}\left( t\right) =\left( a_{\alpha j}^{i}\left( t,x_{t},u_{t}\right)
dt^{\alpha}+q_{bj}^{i}\left( t,x_{t},u_{t}\right) dW_{t}^{b}\right)
p_{i}\left( t\right) ,~b=\overline{1,d},~i,j=\overline{1,n}
\end{equation*}
is called {\it adjoint multitime stochastic Pfaff system} if the interior
product $p_{k}\left(t\right) \xi ^{k}\left( t\right)$ is a {\it global multitime
stochastic first integral}.
\end{definition}

\begin{theorem}
The multitime stochastic Pfaff system
\begin{equation*}
dp_{j}\left( t,\omega \right) =[\left( -\mu _{\alpha x^{j}}^{i}\left(
t,x_{t},u_{t}\right) +\sigma _{ax^{k}}^{i}\left( t,x_{t},u_{t}\right)
\sigma_{bx^{j}}^{k}\left( t,x_{t},u_{t}\right) \delta
^{ab}c_{\alpha}(t)\right) dt^{\alpha }-
\end{equation*}
\begin{equation*}
-\sigma _{ax^{j}}^{i}\left( t,x_{t},u_{t}\right) dW_{t}^{a}]~p_{i}\left(
t,\omega \right)
\end{equation*}
is the adjoint multitime stochastic Pfaff system with respect to the
variational multitime stochastic Pfaff system.
\end{theorem}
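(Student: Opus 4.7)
The plan is to verify directly that $p_i(t)\xi^i(t)$ is a global multitime stochastic first integral, i.e., $d\bigl(p_i(t)\xi^i(t)\bigr)=0$, which is precisely the defining property of the adjoint multitime stochastic Pfaff system. All the machinery is already set up: the variational system furnishes $d\xi^i$, the proposed adjoint furnishes $dp_j$, and Lemma~\ref{lemma copy(1)} (the It\^{o}--Udri\c ste product formula) tells us how to differentiate the interior product.

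First, I would apply the product formula to the interior product to obtain
\begin{equation*}
d\bigl(p_i(t)\xi^i(t)\bigr)=p_i\,d\xi^i+\xi^i\,dp_i+\bigl(\text{diff.\ coef.\ of }p_i\bigr)_b\bigl(\text{diff.\ coef.\ of }\xi^i\bigr)_a\delta^{ab}c_\alpha(t)\,dt^\alpha.
\end{equation*}
Reading off the coefficients from the two Pfaff systems: the diffusion coefficient of $\xi^i$ with respect to $dW^a$ is $\sigma^i_{ax^k}\xi^k$, while for $p_i$ it is $-\sigma^\ell_{ax^i}p_\ell$. Substituting these into the Itô correction term produces
\begin{equation*}
-\,\sigma^\ell_{ax^i}\sigma^i_{ax^j}\,p_\ell\,\xi^j\,c_\alpha(t)\,dt^\alpha,
\end{equation*}
after the $\delta^{ab}$ collapses the $W$-indices.

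Next I would collect the $dt^\alpha$ coefficients coming from $p_i\,d\xi^i+\xi^i\,dp_i$. The direct drift contributions are $p_i\mu^i_{\alpha x^k}\xi^k$ from $p_i\,d\xi^i$ and $-\xi^j\mu^i_{\alpha x^j}p_i$ from $\xi^j\,dp_j$; after the rename $k\to j$ these cancel exactly. The remaining drift contribution from $\xi^j\,dp_j$ is $\xi^j\sigma^i_{ax^k}\sigma^k_{ax^j}\,p_i\,c_\alpha(t)\,dt^\alpha$, and after relabelling $\ell\to i,\;i\to k$ in the Itô correction term above one sees this is the negative of that term. Hence all $dt^\alpha$ contributions cancel.

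Finally I would handle the $dW^a$ terms: from $p_i\,d\xi^i$ one gets $p_i\sigma^i_{ax^k}\xi^k$, and from $\xi^j\,dp_j$ one gets $-\xi^j\sigma^i_{ax^j}p_i$; these are opposite after renaming $k\leftrightarrow j$, so they also cancel. Therefore $d(p_i\xi^i)=0$ and $p_i(t)\xi^i(t)$ is a global multitime stochastic first integral, which is the adjointness condition. The only delicate point, where I expect careful bookkeeping to be essential, is matching the Itô--Udri\c ste correction $\sigma^\ell_{ax^i}\sigma^i_{ax^j}p_\ell\xi^j c_\alpha dt^\alpha$ against the extra quadratic-in-$\sigma$ drift built into the proposed adjoint equation; getting the signs, the placement of the differentiation index $x^j$ versus $x^i$, and the contraction pattern with $\delta^{ab}c_\alpha(t)$ right is what forces the precise form of the adjoint drift stated in the theorem, and simultaneously explains why no other choice would kill the $dt^\alpha$ part.
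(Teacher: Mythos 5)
Your proposal is correct and takes essentially the same route as the paper: both proofs apply the It\^{o}--Udri\c{s}te product formula to $d\bigl(p_k(t)\xi^k(t)\bigr)$, separate the $dt^\alpha$ and $dW^a$ coefficients, and observe that the quadratic-in-$\sigma$ It\^{o} correction $-\sigma^{\ell}_{bx^i}\sigma^{i}_{ax^j}p_\ell\xi^j\delta^{ab}c_\alpha(t)\,dt^\alpha$ is exactly offset by the extra drift term built into the adjoint system, your index bookkeeping being accurate throughout. The only (immaterial) difference is direction: the paper posits unknown coefficients $a^{i}_{\alpha j}$, $q^{i}_{bj}$ and solves the vanishing conditions $q^{i}_{aj}=-\sigma^{i}_{ax^j}$ and $a^{i}_{\alpha j}=-\mu^{i}_{\alpha x^j}-q^{i}_{ak}\sigma^{k}_{bx^j}\delta^{ab}c_\alpha(t)$ to derive the stated system, whereas you substitute the stated coefficients and verify the cancellation, which suffices for the theorem as phrased.
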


\begin{proof}
Let
\begin{equation*}
d\xi _{t}^{i}=\left( \mu _{\alpha x^{j}}^{i}\left( t,x_{t},u_{t}\right)
dt^{\alpha }+\sigma _{ax^{j}}^{i}\left( t,x_{t},u_{t}\right)
dW_{t}^{a}\right) \xi _{t}^{j},~i,j=\overline{1,n},
\end{equation*}%
be the variational stochastic multitime Pfaff system. Denote the adjoint
multitime stochastic Pfaff system by
\begin{equation*}
dp_{j}\left( t\right) =\left( a_{\alpha j}^{i}\left( t,x_{t},u_{t}\right)
dt^{\alpha }+q_{bj}^{i}\left( t,x_{t},u_{t}\right) dW_{t}^{b}\right)
p_{i}\left( t\right) ,~i,j=\overline{1,n}.
\end{equation*}%
We determine the coefficients $a_{\alpha j}^{i}$ and $q_{bj}^{i}$ such that $%
p_{k}\left( t\right) \xi ^{k}\left( t\right) $ to be a multitime stochastic
first integral, i.e.,
\begin{equation*}
d\left( p_{k}\left( t\right) \xi ^{k}\left( t\right) \right) =0,
\end{equation*}%
where $d$ is the stochastic differential. Imposing the identity
\begin{equation*}
p_{i}\left( t\right) \xi ^{i}\left( t\right) =p_{i}\left( 0\right) \xi
^{i}\left( 0\right) ,\text{ for any $t\in \Omega _{0T},$}
\end{equation*}%
or
\begin{equation*}
0=d\left( p_{k}\left( t\right) \xi ^{k}\left( t\right) \right) =p_{i}\left(
t\right) \xi ^{j}\left( t\right) (\mu _{\alpha x^{j}}^{i}\left(
t,x_{t},u_{t}\right) +a_{\alpha j}^{i}\left( t,x_{t},u_{t}\right) +
\end{equation*}%
\begin{equation*}
+q_{ak}^{i}\left( t,x_{t},u_{t}\right) \sigma _{bx^{j}}^{k}\left(
t,x_{t},u_{t}\right) \delta ^{ab}c_{\alpha }(t))dt^{\alpha }+
\end{equation*}%
\begin{equation*}
+p_{i}\left( t\right) \xi ^{j}\left( t\right) \left( \sigma
_{ax^{j}}^{i}\left( t,x_{t},u_{t}\right) +q_{aj}^{i}\left(
t,x_{t},u_{t}\right) \right) dW_{t}^{a},
\end{equation*}%
we obtain
\begin{equation*}
a_{\alpha j}^{i}\left( t,x_{t},u_{t}\right) =-\mu _{\alpha x^{j}}^{i}\left(
t,x_{t},u_{t}\right) -q_{ak}^{i}\left( t\right) \sigma _{bx^{j}}^{k}\left(
t,x_{t},u_{t}\right) \delta ^{ab}c_{\alpha }(t),
\end{equation*}%
\begin{equation*}
q_{aj}^{i}\left( t,x_{t},u_{t}\right) =-\sigma _{ax^{j}}^{i}\left(
t,x_{t},u_{t}\right) .
\end{equation*}
\end{proof}

\section{Optimization problems with \\stochastic curvilinear integral functionals} \label{optimizare}

\textit{Multitime stochastic optimal control problems with terminal
conditions} have some features: there is a \textit{multitime diffusion system%
}, which is described by a multitime It\^{o} - Pfaff stochastic
differential system; there are some \textit{constraints} that the decisions
and/or the state are subject to; there is a \textit{criterion} that measures
the performance of the decisions. The goal is to \textit{optimize} the
criterion by selecting a \textit{nonanticipative} decision among the ones
satisfying all the constraints.

\subsection{Multitime stochastic maximum principle}

The idea to use curvilinear integrals in stochastic control theory is very recent \cite{udriste02}, \cite{Ka:05}. Our paper \cite{udriste02} refers
to multitime stochastic maximum principle, It\^ o - Udri\c ste formulas and Hamilton-Jacobi-Bellman approach, while the paper \cite{Ka:05}
obtains a stochastic curvilinear integral via Hamilton-Jacobi-Bellman approach. Also, some geometrical methods 
used in the single-time stochastic theory (see \cite{FrVi:10}), can be extended to multitime stochastic techniques.

The cost functionals of stochastic economical and/or mechanical work type are
very important for applications. This motivates to solve the multitime stochastic optimal control problem 
\begin{equation}
\underset{u\left( \cdot \right) }{\max \,\,}\mathbb{E}\left[
\int_{\gamma_{0T}}f_{\alpha }\left( t,x_{t},u_{t}\right) dt^{\alpha }+
\Psi \left(x\left( T,\omega \right) \right) \right]  \label{stoch001}
\end{equation}
subject to a multitime It\^{o} process constraint
\begin{equation}
dx_{t}^{i}=\mu _{\alpha }^{i}\left( t,x_{t},u_{t}\right) dt^{\alpha }+
\sigma_{a}^{i}\left( t,x_{t},u_{t}\right) dW_{t}^{a}, \,\,x\left( 0\right) =x_{0},~x\left( T\right) =x_{T},
 \label{formula002}
\end{equation}
where $x_{t}=(x_{t}^{i})_{i=\overline{1,n}}$ is the
{\it multitime state variable}, $u\left( t\right) \in U,~\forall t\in \Omega _{0T}$ is the
{\it multitime closed-loop control variable}, $W_{t}=\left(
W_{t}^{1},...,W_{t}^{d}\right) $ is a {\it standard multitime Wiener process}.

The problem (\ref{stoch001})-(\ref{formula002}) can often be handled by
applying a kind of "Lagrange multiplier" method, as in holonomic and
nonholonomic approach of deterministic case \cite{udr0301}, respectively,
\cite{udr03}. The {\it stochastic running cost }$\eta =f_{\alpha }\left(
t,x_{t},u_{t}\right) dt^{\alpha }$, \thinspace $\alpha =\overline{1,m}$ is a
{\it stochastic Lagrangian }$1-$\textit{form}. We introduce the {\it stochastic Lagrange multiplier}
$
p_{i}\left( t,\omega \right) \in L_{\mathcal{F}}^{2}\left( \Omega _{0T},
\mathbb{R}^{n}\right) ,\text{ }i=\overline{1,n},
$
where $L_{\mathcal{F}}^{2}\left( \Omega _{0T},\mathbb{R}^{n}\right) $ is the
space of all $\mathbb{R}^{n}-$valued adapted processes
$\left( \phi_{t}\right) _{t\in \Omega _{0T}}$ such that
\begin{equation*}
\mathbb{E}\left[ \int_{\gamma _{0t}}\left\Vert \phi \left(s(\tau),x_{s(\tau)}\right)
\right\Vert d\tau\right] <\infty \text{ for all }t\in \Omega _{0T},
\end{equation*}
where $\tau$ is the curvilinear abscissa on the increasing curve
$\gamma _{0t}$, $t\in \Omega _{0T}$.
To use geometrical methods in control theory, let us suppose
$\left(p_{i}\left( t,\omega \right) \right) _{t\in \Omega _{0T}}$ as a
multitime It\^{o} process, i.e.,
\begin{equation*}
dp_{i}\left( t\right) =a_{i\alpha }\left( t,x_{t},u_{t}\right) dt^{\alpha}+
q_{ia}\left( t,x_{t},u_{t}\right) dW_{t}^{a},
\end{equation*}
where $\left[ \left( a_{i\alpha }\left( t,x_{t},u_{t}\right) \right) _{t\in
\Omega _{0T}}\right] _{\substack{ 1\leq i\leq n  \\ 1\leq \alpha \leq m}}$,
respectively, $\left[ \left( q_{ia}\left( t,x_{t},u_{t}\right) \right)
_{t\in \Omega _{0T}}\right] _{\substack{ 1\leq i\leq n  \\ 1\leq a\leq d}}$
are matrices of previsible processes. The {\it adjoint process} $\left(
p_{i}\left( t,\omega \right) \right) _{i=\overline{1,n}}$\ is required to be
$\left( \mathcal{F}_{t}\right) _{t\in \Omega _{0T}}-$adapted, for any $t\in
\Omega _{0T}$.

To solve the foregoing problem, we use the {\it stochastic Lagrangian $1-$form}
$$
\mathcal{L}\left( t,x_{t},u_{t},p_{t}\right) =f_{\alpha }\left(
t,x_{t},u_{t}\right) dt^{\alpha }
$$
$$
+p_{i}\left( t\right) \left[ \mu _{\alpha
}^{i}\left( t,x_{t},u_{t}\right) dt^{\alpha }+\sigma _{a}^{i}\left(
t,x_{t},u_{t}\right) dW_{t}^{a}-dx_{t}^{i}\right] .
$$

The foregoing multitime stochastic optimization problem, constrained by a contact distribution, with stochastic perturbations, (\ref{stoch001})-(\ref{formula002}) can be
change into another free multitime stochastic optimization problem%
\begin{equation}
\underset{u\left( \cdot ,\omega \right) \in \mathcal{A}_{ad}}{\max }\mathbb{E%
}\left[ \int_{\gamma _{0T}}\mathcal{L}\left( t,x_{t},u_{t},p_{t}\right)
+\Psi \left( x\left( T,\omega \right) \right) \right] ,  \label{problem}
\end{equation}%
\begin{equation*}
\text{subject to}
\end{equation*}%
\begin{equation*}
p\left( t,\omega \right) \in \mathcal{P},\mathcal{~}\forall t\in \Omega
_{0T},~x\left( 0,\omega \right) =x_{0}\in
%TCIMACRO{\U{211d} }%
%BeginExpansion
\mathbb{R}
%EndExpansion
^{n},
\end{equation*}%
where the set $\mathcal{P}$ will be defined as the set of adjoint multitime
stochastic processes in an appropriate context. The problem (\ref{problem})
can be rewritten as%
\begin{equation}
\underset{u\left( \cdot ,\omega \right) \in \mathcal{A}_{ad}}{\max }\mathbb{E%
}\left\{ \int_{\gamma _{0T}}\left[ f_{\alpha }\left( t,x_{t},u_{t}\right)
+p_{i}\left( t\right) \mu _{\alpha }^{i}\left( t,x_{t},u_{t}\right) \right]
dt^{\alpha }\right.
\end{equation}%
\begin{equation*}
+\left. \int_{\gamma _{0T}}p_{i}\left( t\right) \sigma _{a}^{i}\left(
t,x_{t},u_{t}\right) dW_{t}^{a}-\int_{\gamma _{0T}}p_{i}\left( t\right)
dx_{t}^{i}+\Psi \left( x\left( T,\omega \right) \right) \right\} ,
\end{equation*}%
\begin{equation*}
\text{subject to}
\end{equation*}%
\begin{equation*}
p\left( t,\omega \right) \in \mathcal{P},\mathcal{~}\forall t\in \Omega
_{0T},~x\left( 0,\omega \right) =x_{0}\in
%TCIMACRO{\U{211d} }%
%BeginExpansion
\mathbb{R}
%EndExpansion
^{n},~i=\overline{1,n}.
\end{equation*}

Due to properties of stochastic curvilinear integrals (Remark \ref%
{remarca_01}), the terms containing $dW_t^a$ vanish. Evaluating $\int_{\gamma _{0T}}p_{i}\left( t,\omega \right) dx_{t}^{i}$,
via multitime stochastic integration by parts, it appears the {\it control Hamiltonian multitime stochastic $1-$form}
$$
\mathcal{H}\left( t,x_{t},u_{t},p_{t}\right) =\left(f_{\alpha }\left(t,x_{t},u_{t}\right) +p_{i}\left( t\right) \mu _{\alpha}^{i}\left( t,x_{t},u_{t}\right)\right. 
$$
$$
\left. -p_{i}\left( t\right) \sigma _{x^{j}b}^{i}\left( t,x_{t},u_{t}\right)\sigma _{a}^{j}\left( t,x_{t},u_{t}\right) \delta^{ab}c_{\alpha}(t)\right)dt^{\alpha}.
$$
It verifies a {\it modified multitime stochastic Legendrian duality}, i.e.,

\begin{eqnarray*}
\mathcal{H} &\mathcal{=}&\mathcal{L+}p_{i}\left( t\right)
dx_{t}^{i}-p_{i}\left( t\right) \sigma _{ax^{j}}^{i}\left(
t,x_{t},u_{t}\right) \sigma _{b}^{i}\left( t,x_{t},u_{t}\right) \delta
^{ab}c_{\alpha }(t)dt^{\alpha } \\
&&-p_{i}\left( t\right) \sigma _{a}^{i}\left( t,x_{t},u_{t}\right)
dW_{t}^{a}.
\end{eqnarray*}

The key point to the derivation of the necessary
conditions of optimality is that the multitime Legendre stochastic transformation of the multitime Lagrangian stochastic $1$-form to
be minimized into a multitime Hamiltonian stochastic $1$-form converts a functional minimization problem into a static
optimization problem on the $1$-form $\mathcal{H}\left( t,x_{t},u_{t},p_{t}\right)$.

\begin{theorem}[Stochastic maximum principle]
\label{simp_stoch_max_princ}We assume the conditions $H1-H3$. Suppose that
the problem of maximizing the functional (\ref{stoch001}) constrained by (\ref{formula002}) over $\mathcal{A}_{ad}$ has an interior optimal solution $%
u^{\ast }\left( t\right) $, which determines the multitime stochastic
optimal evolution $x\left( t\right) $. Let $\mathcal{H}$ be the Hamiltonian
multitime stochastic $1-$form. Then there exists an adapted process $\left(
p\left( t,\omega \right) \right) _{t\in \Omega _{0T}}$ (adjoint process)
satisfying:

(i) the initial multitime stochastic differential system,
\begin{eqnarray*}
dx^{i}\left( t\right) &=&\frac{\partial \mathcal{H}}{\partial p_{i}}\left(
t,x_{t},u_{t}^{\ast },p_{t}\right) +\sigma _{ax^{j}}^{i}\left(
t,x_{t},u_{t}^{\ast }\right) \sigma _{b}^{i}\left( t,x_{t},u_{t}^{\ast
}\right) \delta ^{ab}c_{\alpha }(t)dt^{\alpha } \\
&&+\sigma _{a}^{i}\left( t,x_{t},u_{t}^{\ast }\right) dW_{t}^{a};
\end{eqnarray*}

(ii) the linear multitime stochastic adjoint system,
\begin{eqnarray*}
dp_{i}\left( t\right) &=&-\mathcal{H}_{x^{i}}\left( t,x_{t},u_{t}^{\ast
},p_{t}\right) -p_{j}\left( t\right) \sigma _{ax^{i}}^{j}\left(
t,x_{t},u_{t}^{\ast }\right) dW_{t}^{a}, \\
p_{i}\left( T\right) &=&\Psi _{x^{i}}\left( x_{T}\right) ,~i=\overline{1,n};
\end{eqnarray*}

(iii) the critical point condition,

\begin{equation*}
\mathcal{H}_{u^{c}}\left( t,x_{t},u_{t}^{\ast },p_{t}\right) =0,~c=\overline{%
1,k}.
\end{equation*}
\end{theorem}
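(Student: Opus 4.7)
The plan is a standard Lagrange-multiplier / variational argument adapted to the multitime curvilinear stochastic setting, with the adjoint stochastic Pfaff system and the Itô-Udrişte product formula (Lemma from Section \ref{SistAdjuncte}) doing the heavy lifting. I begin by rewriting the constrained problem as the free problem over the stochastic Lagrangian $1$-form $\mathcal{L}(t,x_t,u_t,p_t)$ already introduced above; for any feasible $(x,u)$ and any It\^o process $p$, the value of $\mathbb{E}\int_{\gamma_{0T}}\mathcal{L}$ coincides with the original cost up to the expectation of the boundary term $p_i(T)x^i(T)-p_i(0)x^i(0)$ produced by integration by parts (i.e., by the Itô-Udrişte product formula applied to $p_i(t)\,x^i(t)$), which is why the Legendrian-type identity $\mathcal{H}=\mathcal{L}+p_i\,dx^i - p_i\sigma_{ax^j}^i\sigma_b^j\delta^{ab}c_\alpha\,dt^\alpha - p_i\sigma_a^i\,dW^a$ holds. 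Here the stochastic integrals against $dW^a_t$ have zero expectation (Remark \ref{remarca_01}), and the correction term $q_{ib}\sigma_a^i\delta^{ab}c_\alpha\,dt^\alpha$ in the product formula is absorbed into the definition of $\mathcal{H}$, producing the modified Hamiltonian $1$-form.

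Next, I would perturb the optimal interior control by $u^*_t+\varepsilon v_t$, where $v$ is an admissible $\mathcal{F}_t$-adapted bounded variation supported in the interior of $U$, and let $\xi_t=\left.\frac{\partial x_t^\varepsilon}{\partial \varepsilon}\right|_{\varepsilon=0}$ be the associated first variation of the state. Under hypotheses (H1)-(H2), classical variational stability (differentiability of the state with respect to the control parameter) shows that $\xi$ solves a controlled version of the stochastic variational multitime Pfaff system (\ref{evolutie}) with an inhomogeneous forcing term in $\mu^i_{\alpha u^c}v^c\,dt^\alpha+\sigma^i_{au^c}v^c\,dW^a$. Differentiating the cost functional at $\varepsilon=0$ and applying the Itô-Udrişte product formula to $p_i(t)\xi^i(t)$, I would obtain
\begin{equation*}
0=\mathbb{E}\Big[\int_{\gamma_{0T}}\!\!\big(\mathcal{H}_{x^i}\,\xi^i+\mathcal{H}_{u^c}v^c\big)\,\text{-terms}+\Psi_{x^i}(x_T)\xi^i(T)-p_i(0)\xi^i(0)-\langle p,d\xi\rangle\text{ cancellations}\Big],
\end{equation*}
where the product-formula cross term $q_{ib}\sigma^i_{ax^j}\delta^{ab}c_\alpha\xi^j\,dt^\alpha$ is precisely what the Hamiltonian absorbs via $\sigma_{ax^j}^i\sigma_b^i\delta^{ab}c_\alpha$.

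I would now \emph{choose} $p$ to be the solution of the adjoint multitime stochastic Pfaff system derived in the previous subsection, i.e.,
\begin{equation*}
dp_i(t)=-\mathcal{H}_{x^i}(t,x_t,u^*_t,p_t)-p_j(t)\sigma^j_{ax^i}(t,x_t,u^*_t)\,dW^a_t,
\end{equation*}
with the terminal condition $p_i(T)=\Psi_{x^i}(x_T)$. By construction (this is exactly the computation in the proof of the adjoint system theorem), this choice forces the $\xi^i$-terms in the first variation to cancel identically, while the boundary contribution at $T$ is killed by the terminal condition and the one at $0$ vanishes because $\xi^i(0)=0$. What remains is
\begin{equation*}
0=\mathbb{E}\!\int_{\gamma_{0T}}\mathcal{H}_{u^c}(t,x_t,u^*_t,p_t)\,v^c_t\,\text{-terms},
\end{equation*}
and since $v$ is arbitrary in the interior of $U$, a standard localization/density argument (fundamental lemma of the stochastic calculus of variations on curvilinear integrals) gives the pointwise critical point condition $\mathcal{H}_{u^c}(t,x_t,u^*_t,p_t)=0$. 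Finally, rewriting the original state equation in terms of $\mathcal{H}_{p_i}$ via the modified Legendrian duality yields (i), and (ii), (iii) are exactly what we have just established.

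\textbf{Main obstacle.} The delicate step is not the formal calculus but justifying the differentiability of $x^\varepsilon$ in $\varepsilon$ uniformly on $\gamma_{0T}$ for curvilinear It\^o integrals: one needs an $L^2$ stability estimate for the multitime variational Pfaff system along an arbitrary increasing $C^1$ curve, which in turn requires a Burkholder-type inequality for the Cairoli-Walsh/Udri\c ste curvilinear stochastic integral. Hypothesis (H2) (boundedness of the derivatives of $\mu^i_\alpha$ and $\sigma^i_a$) is precisely what makes this estimate go through, and the linear-growth clause in (H3) controls the boundary term involving $\Psi_{x^i}(x_T)$. Once this $\varepsilon$-differentiation is rigorous, the rest of the argument is algebraic bookkeeping dictated by the Itô-Udri\c ste product formula.
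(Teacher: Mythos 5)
Your proposal is correct and follows essentially the same route as the paper's own proof: a perturbation $u^{\ast}+\varepsilon h$ of the interior optimal control, passage to the free problem via the stochastic Lagrangian $1$-form, integration by parts through the It\^{o}-Udri\c ste product formula (whose cross term $q_{ib}\sigma_{a}^{i}\delta^{ab}c_{\alpha}\,dt^{\alpha}$ is absorbed into the modified Hamiltonian, exactly as you say), then choosing $p$ as the adjoint process with terminal value $\Psi_{x^{i}}(x_{T})$ to eliminate the first variation $x_{\varepsilon}^{k}$, and finally invoking the arbitrariness of the variation to get $\mathcal{H}_{u^{c}}=0$. Even the analytic obstacle you flag is handled the same way in the paper, which justifies mean-square differentiability in $\varepsilon$ by citing V\^{a}rsan, so your reconstruction matches the paper's argument in both structure and detail.
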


\begin{proof}
In the whole this proof, we will omit $\omega $ as argument of
processes. Suppose that there exists a continuous control $u^{\ast
}\left( t,\omega \right) $ over the admissible controls in
$\mathcal{A}_{ad}$, which is an
optimum point in the previous problem. Consider a variation%
\begin{equation*}
u\left( t,\varepsilon \right) =u^{\ast }\left( t\right) +\varepsilon h\left(
t\right) ,
\end{equation*}%
where, by hypothesis, $h\left( t\right) $ is an arbitrary multitime
stochastic process. Since $u^{\ast }\left( t,\omega \right) \in \mathcal{A}%
_{ad}$ and a continuous function over a compact set $\Omega _{0T}$ is
bounded, there exists a number $\varepsilon _{h}>0$ such that%
\begin{equation*}
u\left( t,\varepsilon \right) =u^{\ast }\left( t\right) +\varepsilon h\left(
t\right) \in \mathcal{A}_{ad},~\forall \left\vert \varepsilon \right\vert
<\varepsilon _{h}.
\end{equation*}%
This $\varepsilon $ is used in our variational arguments.

Now, let us define the contact distribution with stochastic multitime
perturbations, corresponding to the control variable $u\left( t,\varepsilon
\right) $, i.e.,%
\begin{equation*}
dx^{i}\left( t,\varepsilon \right) =\mu _{\alpha }^{i}\left( t,x\left(
t,\varepsilon \right) ,u\left( t,\varepsilon \right) \right) dt^{\alpha
}+\sigma _{a}^{i}\left( t,x\left( t,\varepsilon \right) ,u\left(
t,\varepsilon \right) \right) dW_{t}^{a},
\end{equation*}%
for $i=\overline{1,n}$, or,%
\begin{equation*}
x^{i}\left( t,\varepsilon \right) =x^{i}\left( 0,\varepsilon \right)
+\int_{\gamma _{0t}}\mu _{\alpha }^{i}\left( s,x\left( s,\varepsilon \right)
,u\left( s,\varepsilon \right) \right) ds^{\alpha }
\end{equation*}%
\begin{equation*}
+\int_{\gamma _{0t}}\sigma _{a}^{i}\left( s,x\left( s,\varepsilon \right)
,u\left( s,\varepsilon \right) \right) dW_{s}^{a},~\forall t\in \Omega
_{0T},~\forall i=\overline{1,n}
\end{equation*}%
and $x\left( 0,\varepsilon \right) =x_{0}\in
%TCIMACRO{\U{211d} }%
%BeginExpansion
\mathbb{R}
%EndExpansion
^{n}$. For $\left\vert \varepsilon \right\vert <\varepsilon _{h}$, we define
the function%
\begin{equation*}
J\left( \varepsilon \right) =\mathbb{E}\left[ \int_{\gamma _{0T}}f_{\alpha
}\left( t,x\left( t,\varepsilon \right) ,u\left( t,\varepsilon \right)
\right) dt^{\alpha }+\Psi \left( x\left( T,\varepsilon \right) \right) %
\right] .
\end{equation*}

For any adapted process $\left( p_{t}\right) _{t\in \Omega _{0T}}$ we have%
\begin{equation*}
\int_{\gamma _{0T}}p_{i}\left( t\right) ~\left[ \mu _{\alpha }^{i}\left(
t,x\left( t,\varepsilon \right) ,u\left( t,\varepsilon \right) \right)
dt^{\alpha }-dx^{i}\left( t,\varepsilon \right) \right]
\end{equation*}%
\begin{equation*}
+\int_{\gamma _{0T}}p_{i}\left( t\right) ~\sigma _{a}^{i}\left( t,x\left(
t,\varepsilon \right) ,u\left( t,\varepsilon \right) \right) dW_{t}^{a}=0,~i=%
\overline{1,n}.
\end{equation*}

To solve the forgoing constrained multitime stochastic optimization problem,
we transform it into a free multitime stochastic optimization problem
(\cite{udr0301}). For that, we use the Lagrange multitime stochastic $1$-form
which includes the variations%
\begin{equation*}
\mathcal{L}\left( t,x\left( t,\varepsilon \right) ,u\left( t,\varepsilon
\right) ,p_{t}\right) =f_{\alpha }\left( t,x\left( t,\varepsilon \right)
,u\left( t,\varepsilon \right) \right) dt^{\alpha }
\end{equation*}%
\begin{equation*}
+p_{i}\left( t\right) \left[ \mu _{\alpha }^{i}\left( t,x\left(
t,\varepsilon \right) ,u\left( t,\varepsilon \right) \right) dt^{\alpha
}+\sigma _{a}^{i}\left( t,x\left( t,\varepsilon \right) ,u\left(
t,\varepsilon \right) \right) dW_{t}^{a}-dx^{i}\left( t,\varepsilon \right) %
\right] ,
\end{equation*}%
where $i=\overline{1,n}$. We have to optimize now the function%
\begin{equation*}
\widetilde{J}\left( \varepsilon \right) =\mathbb{E}\left[ \int_{\gamma _{0T}}%
\mathcal{L}\left( t,x\left( t,\varepsilon \right) ,u\left( t,\varepsilon
\right) ,p_{t}\right) +\Psi \left( x\left( T,\varepsilon \right) \right) %
\right] ,
\end{equation*}%
with doubt any constraints. If the control $u^{\ast }\left( t\right) $ is
optimal, then%
\begin{equation*}
\widetilde{J}\left( \varepsilon \right) \leq \widetilde{J}\left( 0\right)
,~\forall \left\vert \varepsilon \right\vert <\varepsilon _{h}.
\end{equation*}

Explicitly,%
\begin{equation*}
\widetilde{J}\left( \varepsilon \right) =\mathbb{E}\int_{\gamma _{0T}}\left[
f_{\alpha }\left( t,x\left( t,\varepsilon \right) ,u\left( t,\varepsilon
\right) \right) +p_{i}\left( t\right) \mu _{\alpha }^{i}\left( t,x\left(
t,\varepsilon \right) ,u\left( t,\varepsilon \right) \right) \right]
dt^{\alpha }
\end{equation*}%
\begin{equation*}
-\mathbb{E}\left[ \int_{\gamma _{0T}}p_{i}\left( t,\omega \right)
dx^{i}\left( t,\omega ,\varepsilon \right) +\Psi \left( x\left(
T,\varepsilon \right) \right) \right] ,~i=\overline{1,n}.
\end{equation*}

To evaluate the integral%
\begin{equation*}
\int_{\gamma _{0T}}p_{i}\left( t\right) ~dx^{i}\left( t,\varepsilon \right) ,
\end{equation*}%
we integrate by parts, via It\^{o}-Udri\c ste product formula.
Taking into account that $\left( p_{t}\right) _{t\in \Omega _{0T}}$ is a
multitime It\^{o} process, we obtain%
\begin{equation*}
\widetilde{J}\left( \varepsilon \right) =\mathbb{E}\int_{\gamma _{0T}}\left[
f_{\alpha }\left( t,x\left( t,\varepsilon \right) ,u\left( t,\varepsilon
\right) \right) +p_{i}\left( t\right) \mu _{\alpha }^{i}\left( t,x\left(
t,\varepsilon \right) ,u\left( t,\varepsilon \right) \right) \right]
dt^{\alpha }
\end{equation*}%
\begin{equation*}
-\mathbb{E}\left[ \left. p_{i}\left( t\right) ~x^{i}\left( t,\varepsilon
\right) \right\vert _{0}^{T}-\int_{\widetilde{\gamma }_{0T}}x^{i}\left(
t,\varepsilon \right) ~dp_{i}\left( t\right) \right] +\mathbb{E}\Psi \left( x\left( T,\varepsilon \right) \right) 
\end{equation*}%
\begin{equation*}
-\mathbb{E}\int_{\gamma _{0T}}p_{j}\left( t\right) \sigma
_{ax^{i}}^{j}\left( t,x\left( t,\varepsilon \right) ,u\left( t,\varepsilon
\right) \right) \sigma _{b}^{i}\left( t,x\left( t,\varepsilon \right)
,u\left( t,\varepsilon \right) \right) \delta ^{ab}c_{\alpha }dt^{\alpha },
\end{equation*}%
with $\sigma _{ax^{i}}^{j}\left( t,x\left( t,\varepsilon \right) ,u\left(
t,\varepsilon \right) \right) \mid _{\varepsilon =0}\equiv \sigma
_{ax^{i}}^{j}\left( t,x_{t},u_{t}\right) $, for all $i,j=\overline{1,n}$ and
$a=\overline{1,d}$ and where $\widetilde{\gamma }_{0T}$ is a inverse image
of the projection $\pi \circ \widetilde{\gamma }_{0T}=\gamma _{0T}$. Then,%
\begin{equation*}
\widetilde{J}\left( \varepsilon \right) =\mathbb{E}\left[ \int_{\gamma
_{0T}}f_{\alpha }\left( t,x\left( t,\varepsilon \right) ,u\left(
t,\varepsilon \right) \right) dt^{\alpha }\right]
\end{equation*}%
\begin{equation*}
+\mathbb{E}\int_{\gamma _{0T}}[p_{i}\left( t\right) \mu _{\alpha }^{i}\left(
t,x\left( t,\varepsilon \right) ,u\left( t,\varepsilon \right) \right)
\end{equation*}%
\begin{equation*}
-p_{j}\left( t\right) \sigma _{ax^{i}}^{j}\left( t,x\left( t,\varepsilon
\right) ,u\left( t,\varepsilon \right) \right) \sigma _{b}^{i}\left(
t,x\left( t,\varepsilon \right) ,u\left( t,\varepsilon \right) \right)
\delta ^{ab}c_{\alpha }]~dt^{\gamma }
\end{equation*}%
\begin{equation*}
+\mathbb{E~}\left[ \int_{\widetilde{\gamma }_{0T}}x^{i}\left( t,\varepsilon
\right) ~dp_{i}\left( t\right) -\left. p_{i}\left( t\right) ~x^{i}\left(
t,\varepsilon \right) \right\vert _{0}^{T}\right] +\mathbb{E}\Psi \left(
x\left( T,\varepsilon \right) \right).
\end{equation*}

Differentiating with respect to $\varepsilon $ (and this is possible,
because the derivative with respect to $\varepsilon $, for $\varepsilon =0$,
exists in mean square sense; see, for example \cite{var}), it follows

\begin{equation*}
\widetilde{J}^{\prime }\left( \varepsilon \right) =\mathbb{E}\int_{\gamma
_{0T}}\{[f_{\alpha x^{k}}\left( t,x\left( t,\varepsilon \right) ,u\left(
t,\varepsilon \right) \right) +p_{i}\left( t\right) \mu _{\alpha
x^{k}}^{i}\left( t,x\left( t,\varepsilon \right) ,u\left( t,\varepsilon
\right) \right)
\end{equation*}%
\begin{equation*}
-p_{j}\left( t\right) \sigma _{ax^{i}x^{k}}^{j}\left( t,x\left(
t,\varepsilon \right) ,u\left( t,\varepsilon \right) \right) \sigma
_{b}^{i}\left( t,x\left( t,\varepsilon \right) ,u\left( t,\varepsilon
\right) \right) \delta ^{ab}c_{\alpha }
\end{equation*}%
\begin{equation*}
-p_{j}\left( t\right) \sigma _{ax^{i}}^{j}\left( t,x\left( t,\varepsilon
\right) ,u\left( t,\varepsilon \right) \right) \sigma _{bx^{k}}^{i}\left(
t,x\left( t,\varepsilon \right) ,u\left( t,\varepsilon \right) \right)
\delta ^{ab}c_{\alpha }]~dt^{\alpha }+dp_{k}\left( t\right)\}\,x_{\varepsilon }^{k}
\end{equation*}%
\begin{equation*}
+\mathbb{E}\int_{\gamma _{0T}}[f_{\alpha u^{c}}\left( t,x\left(
t,\varepsilon \right) ,u\left( t,\varepsilon \right) \right) +p_{i}\left(
t\right) \mu _{\alpha u^{c}}^{i}\left( t,x\left( t,\varepsilon \right)
,u\left( t,\varepsilon \right) \right)
\end{equation*}%
\begin{equation*}
-p_{j}\left( t\right) \sigma _{ax^{i}u^{c}}^{j}\left( t,x\left(
t,\varepsilon \right) ,u\left( t,\varepsilon \right) \right) \sigma
_{b}^{i}\left( t,x\left( t,\varepsilon \right) ,u\left( t,\varepsilon
\right) \right) \delta ^{ab}c_{\alpha }
\end{equation*}%
\begin{equation*}
-p_{j}\left( t\right) \sigma _{ax^{i}}^{j}\left( t,x\left( t,\varepsilon
\right) ,u\left( t,\varepsilon \right) \right) \sigma _{bu^{c}}^{i}\left(
t,x\left( t,\varepsilon \right) ,u\left( t,\varepsilon \right) \right)
\delta ^{ab}c_{\alpha }]~h^{c}\left( t\right) ~dt^{\alpha }
\end{equation*}%
\begin{equation*}
+\mathbb{E}\Psi _{x^{k}}\left( x\left( T,\varepsilon \right) \right)
x_{\varepsilon }^{k}\left( T,\varepsilon \right) ,~c=\overline{1,k}.
\end{equation*}

Evaluating at $\varepsilon =0$, we find%
\begin{equation*}
\widetilde{J}^{\prime }\left( 0\right) =\mathbb{E}\int_{\gamma
_{0T}}\{[f_{\alpha x^{k}}\left( t,x_{t},u_{t}\right) +p_{i}\left( t\right)
\mu _{\alpha x^{k}}^{i}\left( t,x_{t},u_{t}\right)
\end{equation*}%
\begin{equation*}
-p_{j}\left( t\right) \sigma _{ax^{i}x^{k}}^{j}\left( t,x_{t},u_{t}\right)
\sigma _{b}^{i}\left( t,x_{t},u_{t}\right) \delta ^{ab}c_{\alpha }
\end{equation*}%
\begin{equation*}
-p_{j}\left( t\right) \sigma _{ax^{i}}^{j}\left( t,x_{t},u_{t}\right) \sigma
_{bx^{k}}^{i}\left( t,x_{t},u_{t}\right) \delta ^{ab}c_{\alpha }]~dt^{\alpha
}+dp_{k}\left( t\right) \}~x_{\varepsilon }^{k}\left( t,0\right)
\end{equation*}%
\begin{equation*}
+\mathbb{E}\int_{\gamma _{0T}}[f_{\alpha u^{c}}\left( t,x_{t},u_{t}^{\ast
}\right) +p_{i}\left( t\right) \mu _{\alpha u^{c}}^{i}\left(
t,x_{t},u_{t}^{\ast }\right)
\end{equation*}%
\begin{equation*}
-p_{j}\left( t\right) \sigma _{ax^{i}u^{c}}^{j}\left( t,x_{t},u_{t}^{\ast
}\right) \sigma _{b}^{i}\left( t,x_{t},u_{t}^{\ast }\right) \delta
^{ab}c_{\alpha }
\end{equation*}%
\begin{equation*}
-p_{j}\left( t\right) \sigma _{ax^{i}}^{j}\left( t,x_{t},u_{t}^{\ast
}\right) \sigma _{bu^{c}}^{i}\left( t,x_{t},u_{t}^{\ast }\right) \delta
^{ab}c_{\alpha }]~h^{c}\left( t\right) ~dt^{\alpha }
\end{equation*}%
\begin{equation*}
+\mathbb{E}\Psi _{x^{k}}\left( x\left( T\right) \right) x_{\varepsilon
}^{k}\left( T\right) ,~c=\overline{1,k}.
\end{equation*}%
where $x\left( t\right) $ is the multitime state variable corresponding to
the optimal control $u^{\ast }\left( t\right) $.

We need $J^{\prime }\left( 0\right) =0$ for all $h\left( t\right) =\left(
h^{c}\left( t\right) \right) _{c=\overline{1,k}}$. On the other hand, the
functions $x_{\varepsilon }^{i}\left( t,0\right) $ are involved in the
Cauchy problem%
\begin{equation*}
dx_{\varepsilon }^{i}\left( t,0\right) =\left( \mu _{\alpha x^{j}}^{i}\left(t,x\left( t,0\right) ,u\left( t,0\right) \right) dt^{\alpha}
+\sigma_{ax^{j}}^{i}\left( t,x_{t},u_{t}\right) dW_{t}^{a}\right) x_{\varepsilon}^{j}\left( t,0\right)
\end{equation*}%
\begin{equation*}
+\left( \mu _{\alpha u^{c}}^{i}\left( t,x\left( t,0\right) ,u\left(
t,0\right) \right) dt^{\alpha }+\sigma _{bu^{c}}^{i}\left(
t,x_{t},u_{t}\right) dW_{t}^{b}\right) h^{c}\left( t\right) ,
\end{equation*}%
\begin{equation*}
t\in \Omega _{0T},~x_{\varepsilon }\left( 0,0\right) =0\in
%TCIMACRO{\U{211d} }%
%BeginExpansion
\mathbb{R}
%EndExpansion
^{n}
\end{equation*}%
and hence they depend on $h\left( t\right) $. The functions $x_{\varepsilon
}^{i}\left( t,0\right) $ are eliminated by selecting $\mathcal{P}$ as the
adjoint contact distribution%
\begin{eqnarray}
dp_{k}\left( t\right)  &=&-[f_{\alpha x^{k}}\left( t,x_{t},u_{t}\right)
+p_{i}\left( t\right) \mu _{\alpha x^{k}}^{i}\left( t,x_{t},u_{t}\right)
\label{relatia4} \\
&&-p_{j}\left( t\right) \sigma _{ax^{i}x^{k}}^{j}\left( t,x_{t},u_{t}\right)
\sigma _{b}^{i}\left( t,x_{t},u_{t}\right) \delta ^{ab}c_{\alpha }\notag
\\
&&-p_{j}\left( t\right) \sigma _{ax^{i}}^{j}\left( t,x_{t},u_{t}\right)
\sigma _{bx^{k}}^{i}\left( t,x_{t},u_{t}\right) \delta ^{ab}c_{\alpha
}]dt^{\alpha }\notag\\
&&-\sigma _{ax^{k}}^{j}\left( t,x_{t},u_{t}\right) p_{j}\left( t\right)dW_{t}^{a},\notag
\end{eqnarray}%
for any multitime $t\in \Omega _{0T}$, with stochastic perturbations terminal value
problem (see, e.g. \cite{udr001})%
\begin{equation*}
p_{k}\left( T\right) =\Psi _{x^{k}}\left( x_{T}\right) ,~k=\overline{1,n}.
\end{equation*}%
The relation (\ref{relatia4}) shows that%
\begin{eqnarray*}
a_{i\alpha }\left( t,x_{t},u_{t}\right)  &=&-f_{\alpha x^{k}}\left(
t,x_{t},u_{t}\right) -p_{i}\left( t\right) \mu _{\alpha x^{k}}^{i}\left(
t,x_{t},u_{t}\right) \\
&&+[p_{j}\left( t\right) \sigma _{ax^{i}x^{k}}^{j}\left(
t,x_{t},u_{t}\right) \sigma _{b}^{i}\left( t,x_{t},u_{t}\right) c_{\alpha }
\\
&&+p_{j}\left( t\right) \sigma _{ax^{i}}^{j}\left( t,x_{t},u_{t}\right)
\sigma _{bx^{k}}^{i}\left( t,x_{t},u_{t}\right) c_{\alpha }]\delta ^{ab}.
\end{eqnarray*}%
It follows%
\begin{equation*}
dp_{i}\left( t,\omega \right) =-\mathcal{H}_{x^{i}}\left(
t,x_{t},u_{t}^{\ast },p_{t}\right) -p_{j}\left( t\right) \sigma
_{ax^{i}}^{j}\left( t,x_{t},u_{t}\right) dW_{t}^{a}\text{,}
\end{equation*}%
\begin{equation*}
\forall t\in \Omega _{0T}\text{, }i=\overline{1,n},
\end{equation*}%
and%
\begin{equation*}
\mathcal{H}_{u^{c}}\left( t,x_{t},u_{t}^{\ast },p_{t}\right) =0\text{, }%
\forall t\in \Omega _{0T},\text{ for all }c=\overline{1,k}.
\end{equation*}%
Moreover,%
\begin{eqnarray*}
dx^{i}\left( t\right)  &=&\frac{\partial \mathcal{H}}{\partial p_{i}}\left(
t,x_{t},u_{t}^{\ast },p_{t}\right) +\sigma _{ax^{j}}^{i}\left(
t,x_{t},u_{t}^{\ast }\right) \sigma _{b}^{j}\left( t,x_{t},u_{t}^{\ast
}\right) \delta ^{ab}c_{\alpha }dt^{\alpha } \\
&&+\sigma _{a}^{i}\left( t,x_{t},u_{t}^{\ast }\right) dW_{t}^{a}\text{,~}%
\forall t\in \Omega _{0T},x\left( 0\right) =x_{0},~\forall i=\overline{1,n}.
\end{eqnarray*}
\end{proof}

{\bf Example} ({\bf Deterministic problem, continuous two-time version},
see \cite{udris2011}) A mine owner must decide at what rate to
extract a complex ore from his mine. He owns rights to the ore from
two-time date $0= (0,0)$ to two-time date $T= (T,T)$. A {\it
two-time date} can be the pair (date, useful component frequency).
At two-time date $0$ there is $x_0 = (x^i_0)$ ore in the ground, and
the instantaneous stock of ore $x(t) = (x^i(t))$ declines at the
rate $u(t) = (u^i_\alpha(t))$ the mine owner extracts it. The mine
owner extracts ore at cost $q_i\frac{{u^i_\alpha(t)}^2}{x^i(t)}$ and
sells ore at a constant price $p= (p_i)$. He does not value the ore
remaining in the ground at time $T$ (there is no "scrap value"). He
chooses the rate $u(t)$ of extraction in two-time to maximize
profits over the period of ownership with no two-time discounting.

{\bf Stochastic model} Since there must be some risk in the investment, the deterministic evolution
$\frac{\partial x}{\partial t^\gamma}(t) = - u_\gamma(t)$ can be changed into a stochastic model.
Hence, the manager want to maximizes the profit (curvilinear integral)
$$P(u(\cdot)) = \int_{\gamma_{0T}}\left(p_iu^i_\alpha(t)- q_i\frac{{u^i_\alpha(t)}^2}{x^i(t)}\right)dt^\alpha$$
subject to the stochastic law of evolution
$$\frac{\partial x}{\partial t^\gamma}(t) = - u_\gamma(t) + W_{t^\gamma},$$
where $W_t = (W^i_{t^\gamma})$ is a matrix Wiener process. Form the Hamiltonian $1$-form
$${\cal H} = \left(p_i u^i_\alpha(t)- q_i\frac{{u^i_\alpha(t)}^2}{x^i(t)} -\lambda_i(t)u^i_\alpha(t)\right)dt^\alpha,$$
differentiate and write the equations
$$\frac{\partial {\cal H}}{\partial u_\beta}= \left(p_i  - 2 q_i\frac{{u^i_\alpha}}{x^i}- \lambda_i\right)\delta^\beta_\alpha\, dt^\alpha = 0,\,\,\hbox{no\, sum\, after\,\,the\,\,index\,\, i};$$
$$d \lambda_i(t) = - \frac{\partial {\cal H}}{\partial x^i} = - q_i \left(\frac{u^i_\alpha(t)}{x^i(t)}\right)^2\,dt^\alpha,\,\,\hbox{no\, sum\, after\,\,the\,\,index\,\, i}.$$
As the mine owner does not value the ore remaining at time $T$, we have $\lambda_i(T) = 0$.
Using the above equations, it is easy to solve for the differential equations governing the control vector $u(t)$ and the dual vector $\lambda_i(t)$:
$$2 q_i\frac{{u^i_\alpha(t)}}{x^i(t)} = p_i  -  \lambda_i(t),\,\,\frac{\partial \lambda_i}{\partial t^\alpha}(t) =  - q_i \left(\frac{u^i_\alpha(t)}{x^i(t)}\right)^2\,\,\hbox{no\, sum\, after\,\, i}$$
and using the initial and turn - $T$ conditions, the equations can be solved numerically.

{\bf Acknowledgement}
In our work we have benefitted from useful suggestions from Prof. Dr. Ionel \c Tevy (University \textit{Politehnica} Bucharest) and
Prof. Dr. Constantin V\^ arsan (Institute of Mathematics \textit{Simion Stoilow} of the Romanian Academy).
We thank them for contributions to the improvement of our paper.

%%-----------------------------
%%      your bibliography
%%-----------------------------

%\bibliographystyle{model1-num-names}
%\bibliography{<your-bib-database>}

\end{document}